\providecommand{\U}[1]{\protect\rule{.1in}{.1in}}
\newtheorem{theorem}{Theorem}
\newtheorem{corollary}[theorem]{Corollary}
\newtheorem{example}[theorem]{Example}
\newtheorem{lemma}[theorem]{Lemma}
\newtheorem{proposition}[theorem]{Proposition}
\newtheorem{remark}[theorem]{Remark}
\newenvironment{proof}[1][Proof]{\noindent\textbf{#1.} }{\ \rule{0.5em}{0.5em}}
\newdimen\dummy
\begin{document}

\title{Stretched random walks and the behaviour of their summands.}
\author{Michel Broniatowski, Zhangsheng Cao\\LSTA, Universit\'{e} Paris 6}
\maketitle

\begin{abstract}
This paper explores the joint behaviour of the summands of a random walk when
their mean value goes to infinity as its length increases.\ It is proved that
all the summands must share the same value, which extends previous results in
the context of large exceedances of finite sums of i.i.d. random variables.
Some consequences are drawn pertaining to the local behaviour of a random walk
conditioned on a large deviation constraint on its end value.\ It is shown
that the sample paths exhibit local oblic segments with increasing size and
slope as the length of the random walk increases.

\end{abstract}

\bigskip

Key words: Random Walk, Extreme deviation, Large deviation,
Erd\"{o}s-R\'{e}nyi law of large numbers, democratic localization.

\section{Introduction}

\subsection{Context and scope}

This paper considers the following question: Let $X,X_{1},..,X_{n}$ denote
real valued independent random variables (r.v's) distributed as $X$ and let
$S_{1}^{n}:=X_{1}+..+X_{n}.$ We assume that $X$ is unbounded upwards. Let
$a_{n}$ be some positive sequence satisfying%
\begin{equation}
\lim_{n\rightarrow\infty}a_{n}=+\infty. \label{LimA_n=infty}%
\end{equation}
Assuming that
\begin{equation}
C:=\left(  S_{1}^{n}/n>a_{n}\right)  \label{EventC}%
\end{equation}
holds, what can be inferred on the r.v's $X_{i}$'s as $n$ goes to infinity?

Let $\varepsilon_{n}$ denote a positive sequence and let
\begin{equation}
I:=\cap_{i=1}^{n}\left(  X_{i}\in(a_{n}-\epsilon_{n},a_{n}+\epsilon
_{n})\right)  . \label{EventI}%
\end{equation}

We consider cases when
\begin{equation}
\lim_{n\rightarrow\infty}P\left(  \left.  I\right\vert C\right)  =1.
\label{QUESTION}%
\end{equation}
The relation between the various parameters in this problem is of interest and
opens a variety of questions. For which distributions\ $P_{X}$ pertaining to
$X$ is such a result valid? Which is the acceptable growth of the sequence
$a_{n}$ and the possible behaviours of the sequence $\varepsilon_{n}$ such
that
\begin{equation}
\epsilon_{n}=o\left(  a_{n}\right)  \label{epsilon_n}%
\end{equation}
and is it possible to achieve
\begin{equation}
\lim_{n\rightarrow\infty}\epsilon_{n}=0 \label{EpsTendsToZero}%
\end{equation}
under a large class of choices for $P_{X}$?

In the case when the r.v. $X$ has light tails conditional limit theorems
exploring the behavior of the summands of a random walk given its sum have
been developped extensively in the range of a large deviation conditioning
event, namely similar as defined by $C$ with fixed $a_{n}$, hence
lower-bounding $S_{n}/n$ independently on $n$; $\ $the papers \cite{CS1984},
or \cite{DIFR1988} together with their extension in \cite{DEZE1996} explore
the asymptotic properties of a relatively small number of summands; the main
result in these papers, named as Gibbs conditional principle, lies in the fact
that under \ such $C$, the $X_{i}$'s are asymptotically i.i.d. with
distribution $\Pi^{a}$ defined through $d\Pi^{a}(x):=\left(  E\left(  \exp
tX\right)  \right)  ^{-1}\exp(tx)dP_{X}(x)$ where $t$ satisfies $E\left(
X\exp tX\right)  \left(  E\left(  \exp tX\right)  \right)  ^{-1}=a$ ; in this
range (\ref{EpsTendsToZero}) does not hold.\ The joint distribution of
$X_{1},..,X_{k_{n}}$ given $C$ (with fixed $a_{n}$) for large $k_{n}$ (close
to $n$) is considered in \cite{BRCA2012} .

Extended large deviations results for $a_{n}\rightarrow\infty$ have been
considered in \cite{BR1987}, \cite{BRMA1994}, in relation with versions of the
Erd\"{o}s-R\'{e}nyi law of large numbers for the small increments of a random
walk, and \cite{JUNA2004}.\ 

The case when $X$ is heavy tailed is considered in \cite{AM2011} where the
authors consider the support of the distribution of the whole sample
$X_{1},..,X_{n}$ $\ $\ when $C$ holds for fixed $a_{n}$ .

A closely related problem has been handled by statisticians in various
contexts, exploring the number of sample observations which push a given
statistics far away from its expectation, for \textit{fixed} $n$. Although
similar in phrasing as the so-called "breakdown point" paradigm of robust
analysis , the frame of this question is quite different from the robustness
point of view, since all the observations are supposed to be sampled under the
distribution $P_{X}$, hence without any reference to outliers or
misspecification.\ The question may therefore be stated as: how many sample
points should be large making a given statistics large? This combines both the
asymptotic behavior of the statistics (as a function defined on $\mathbb{R}%
^{n}$) and the tail properties of $P_{X}$. In the case when the statistics is
$S_{1}^{n}/n$ and $X$ has subexponential upper tail, it is well known that, denoting%

\[
C_{a}:=\left(  S_{1}^{n}/n>a\right)
\]
only one large value of the $X_{i}$'s generates $C_{a}$ for $a\rightarrow
\infty$; clearly $S_{n}/n$ is not a loyal statistics under this sampling. This
result turns back to Darling (1952). For light tails, under $C_{a}$, all
sampled values should exceed $a$ (indeed they should be closer and closer to
$a$ as $a\rightarrow\infty$), so that $S_{n}/n$ is faithfull in allegeance
with respect to the sample. In this case, denoting%

\[
I_{a}:=\cap_{i=1}^{n}\left(  X_{i}>a\right)
\]
it holds
\begin{equation}
\lim_{a\rightarrow\infty}P\left(  \left.  I_{a}\right\vert C_{a}\right)  =1
\label{democracy}%
\end{equation}
Intermediate cases exist, leading to partial loyalty for a given statistics
under a given sampling scheme. See \cite{BRFU1995}, \cite{BEBRTEVY1995}, and
\cite{BABR2004} where more general statistics than $S_{n}/n$ are
considered.\ and $a\rightarrow\infty.$ According to the tail behavior of the
distribution of $X$ the situation may take quite different features.

Related questions have also been considered in the realm of statistical
physics.\ In \cite{FRSO1997} the property (\ref{democracy}) is stated in an
improved form, namely stating that when the $X_{i}$'s are i.i.d. with Weibull
density with shape index larger than 2 then the conditional density of
$(X_{1},..,X_{n})$ given $\left(  S_{1}^{n}/n=a\right)  $ concentrates at
$\left(  a,..a\right)  $ as $a\rightarrow\infty$, which in the authors' words
means that the $X_{i}$'s are \textit{democratically localized}. Applications
of this concept in fragmentation processes, in some form of anomalous
relaxation of glasses and in the study of turbulence flows are discussed.

We now come to a consequence of the present results considering the local
behaviour of a random walk conditioned on its end value. Let $S_{i}^{j}%
:=X_{i}+..+X_{j}$ with $1\leq i\leq j\leq n$ $\ $\ and $k=k_{n}$ denote an
integer valued sequence such that%
\[
k_{n}\leq n
\]
and
\[
\lim_{n\rightarrow\infty}k_{n}=\infty.
\]
Let further
\[
\Delta_{j,n}:=S_{j+1}^{j+k}/k
\]
denote the local slope of the random walk on the interval $\left[
j+1,j+k\right]  $ where $1\leq j\leq n-k.$ The limit behaviour of $\max_{1\leq
j\leq n-k}\Delta_{j,n}$ has been considered extensively in various cases,
according to the order of magnitude of $k$. The case $k=C\log n$ for positive
constant $C$ defines the so-called Erd\"{o}s-R\'{e}nyi law of large numbers;
see \cite{ERRE1970}. In the present case we consider random walks conditioned
upon their end value, namely assuming that
\[
S_{1}^{n}>na
\]
for fixed $a>EX.$ We will prove that as $n\rightarrow\infty$ the path defined
by this random walk exhibits anomalous local behavior that can be captured
through the \textit{extended democratic localization principle} stated in our
results. Indeed there exist segments of length $k_{n}$ on which the slope
$\Delta_{j,n}$ tends to infinity with a rate which can be made precise.
Simulations are proposed in order to enlight this phenomenon. Obviously, when
$a$ is not fixed but goes to infinity with $n$ then the \textit{extended
democratic localization principle }applies to the whole sample path of the
random walk, and its trajectory is nearly a stright line from the origin up to
its extremity. When conditioning in the range of the large deviation only,
this property holds locally.

This paper is organized as follows. Section 2 states the notation and
hypotheses. Section 3 states the results in two cases; the first one pertains
to the case when $X$ has a log-concave density and the second case is a
generalizetion of the former.\ Examples are, provided. Section 4 presents a
short account on the local behaviour of random paths from conditioned random
walk, with some simulation. The proofs of the results are rather long and
technical; they have been postponed to the Appendix.

\section{Notation and hypotheses}

The $n$ real valued random variables $X_{1},...,X_{n}$. are independent copies
of a r.v. $X$ with density $p$ whose support is $\mathbb{R}^{+}.$ As seen by
the very nature of the problem handled in this paper, this assumption puts no
restriction to the results. We write
\[
p(x):=\exp-h(x)
\]
for positive functions $h$ which are defined and denoted according to the
context. For $\mathbf{x}\in\mathbb{R}^{n}$ define
\[
I_{h}\left(  \mathbf{x}\right)  :=\sum_{1\leq i\leq n}h(x_{i}),
\]
and for $A$ a Borel set in $\mathbb{R}^{n}$ denote
\[
I_{h}(A)=\inf_{\left(  \mathbf{x}\right)  \in A}I_{h}\left(  \mathbf{x}%
\right)  .
\]
\bigskip

Two cases will be considered: in the first one $h$ is assumed to be a convex
function, and in the second case $h$ will be the sum of a convex function and
a "smaller" function $h$ in such a way that we will also handle non
log-concave densities.(although not too far from them). Hence we do not
consider heavy tailed r.v. $X.$

For positive $r$ define
\[
S(r)=\left\{  \mathbf{x}:=\left(  x_{1},..,x_{n}\right)  :\sum_{1\leq i\leq
n}h(x_{i})\leq r\right\}  .
\]

\section{Very Large Deviation for Exponential Density Functions associated to
Convex Functions}

\begin{lemma}
\label{Lemma01} Let $g$ be a positive convex differentiable function defined
on $\mathbb{R}_{+}$ . Assume that $g$ is strictly increasing on some interval
$[X,\infty).$ Let (\ref{LimA_n=infty}) hold. Then
\[
I_{g}(I^{c}\cap C)=\min\big(F_{g_{1}}(a_{n},\epsilon_{n}),F_{g_{2}}%
(a_{n},\epsilon_{n})\big),
\]
where
\[
F_{g_{1}}(a_{n},\epsilon_{n})=g(a_{n}+\epsilon_{n})+(n-1)g\left(  a_{n}%
-\frac{1}{n-1}\epsilon_{n}\right)  ,
\]
and
\[
F_{g_{2}}(a_{n},\epsilon_{n})=g(a_{n}-\epsilon_{n})+(n-1)g\left(  a_{n}%
+\frac{1}{n-1}\epsilon_{n}\right)  .
\]

\end{lemma}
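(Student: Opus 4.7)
The plan is to exploit the permutation symmetry of $I_g$ together with the convexity of $g$ in order to reduce the $n$-dimensional infimum to a one-variable optimization, and then to identify the two resulting boundary values with $F_{g_1}$ and $F_{g_2}$. Since $I_g$ is symmetric under permutations of its coordinates and $I^{c}=\bigcup_{i=1}^{n}\{x_{i}\notin(a_{n}-\epsilon_{n},a_{n}+\epsilon_{n})\}$, I may assume without loss of generality that it is $x_{1}$ which lies outside the interval, and split the feasible set into two convex pieces,
\[
A=\{x_{1}\geq a_{n}+\epsilon_{n}\}\cap C,\qquad B=\{x_{1}\leq a_{n}-\epsilon_{n}\}\cap C,
\]
so that $I_{g}(I^{c}\cap C)=\min\bigl(I_{g}(A),I_{g}(B)\bigr)$.

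With $x_{1}$ held fixed, Jensen's inequality applied to the convex $g$ yields $\sum_{i=2}^{n}g(x_{i})\geq(n-1)g(\bar{x})$, where $\bar{x}=(x_{2}+\cdots+x_{n})/(n-1)>(na_{n}-x_{1})/(n-1)$ by the definition of $C$. Invoking the strict monotonicity of $g$ on $[X,\infty)$, which covers the relevant range since $a_{n}\to\infty$, the infimum over the remaining $n-1$ variables is identified as $(n-1)g\bigl((na_{n}-x_{1})/(n-1)\bigr)$, approached by taking $x_{2}=\cdots=x_{n}=(na_{n}-x_{1})/(n-1)$ and letting the sum constraint become tight.

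It then remains to optimize the scalar function $f(x_{1}):=g(x_{1})+(n-1)g\bigl((na_{n}-x_{1})/(n-1)\bigr)$ on each piece. Computing $f'(x_{1})=g'(x_{1})-g'\bigl((na_{n}-x_{1})/(n-1)\bigr)$ and using the monotonicity of $g'$ provided by the convexity of $g$, the sign of $f'(x_{1})$ agrees with the sign of $x_{1}-a_{n}$. Therefore $f$ is non-decreasing on $A$ and non-increasing on $B$, so its infimum on each piece is attained at the respective boundary $x_{1}=a_{n}+\epsilon_{n}$ or $x_{1}=a_{n}-\epsilon_{n}$, yielding $F_{g_{1}}(a_{n},\epsilon_{n})$ and $F_{g_{2}}(a_{n},\epsilon_{n})$ respectively; taking the minimum gives the claim.

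The main delicacy I anticipate is justifying the Jensen reduction uniformly in $x_{1}$: the common value $(na_{n}-x_{1})/(n-1)$ must lie in the monotonicity region $[X,\infty)$ for the lower bound to match the claimed infimum, yet $x_{1}$ could in principle range far from $a_{n}$ inside $A$ or $B$. This is handled by observing that the one-dimensional monotonicity of $f$ already localizes its minimum at $x_{1}=a_{n}\pm\epsilon_{n}$, at which the mean equals $a_{n}\mp\epsilon_{n}/(n-1)$ and is comfortably inside $[X,\infty)$ for $a_{n}$ large; far-from-boundary configurations contribute a large $g(x_{1})$ term and cannot be infimizing. The strict inequality in $C$ is dealt with by passing to closures, since the open constraint $\sum x_{i}>na_{n}$ has the same infimum of $I_g$ as the closed one $\sum x_{i}\geq na_{n}$.
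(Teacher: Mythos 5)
Your proposal is correct and follows the same essential strategy as the paper --- split $I^{c}\cap C$ into the piece where some coordinate exceeds $a_{n}+\epsilon_{n}$ and the piece where some coordinate falls below $a_{n}-\epsilon_{n}$, apply Jensen's inequality to pool the remaining coordinates, and locate the infimum at a boundary point by convexity of $g$. What you do differently is to invoke permutation symmetry from the outset and fix a single bad coordinate $x_{1}$, reducing immediately to a one-variable optimization of $f(x_{1})=g(x_{1})+(n-1)g\bigl((na_{n}-x_{1})/(n-1)\bigr)$. The paper instead orders the coordinates, parametrizes by the number $k$ of coordinates beyond the interval and by their excesses $\alpha_{j}$, and then shows in two stages that the $\alpha_{j}$ should vanish and that $k=1$ is optimal (after first ruling out $k=n$). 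Your route buys a shorter argument that sidesteps the separate $k<n$ verification; the paper's route makes the minimizing configuration explicit at each stage. The arguments are otherwise the same: the sign of $f'(x_{1})=g'(x_{1})-g'\bigl((na_{n}-x_{1})/(n-1)\bigr)$ is exactly the paper's monotonicity-in-$\alpha$ and monotonicity-in-$k$ steps folded into one.

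The one spot worth tightening is the Jensen reduction. The value $(n-1)g\bigl((na_{n}-x_{1})/(n-1)\bigr)$ is the infimum over $x_{2},\dots,x_{n}$ only when $(na_{n}-x_{1})/(n-1)$ lies to the right of the minimizer $u^{*}$ of $g$; for $x_{1}>na_{n}-(n-1)u^{*}$ the constraint $\sum_{i\geq2}x_{i}>na_{n}-x_{1}$ is slack and the true inner infimum is the constant $(n-1)g(u^{*})$. You flag this delicacy but dispatch it a little loosely. The clean fix is to write the reduced objective as $\phi(x_{1})=g(x_{1})+(n-1)\inf_{u\geq\max(0,(na_{n}-x_{1})/(n-1))}g(u)$, note that $\phi=f$ on $[a_{n}+\epsilon_{n},\,na_{n}-(n-1)u^{*}]$ where your derivative computation shows $f$ is non-decreasing, that $\phi(x_{1})=g(x_{1})+(n-1)g(u^{*})$ beyond that point and is increasing there because $x_{1}>a_{n}>X$, and that $\phi$ is continuous across the junction; hence $\phi$ is non-decreasing on all of $[a_{n}+\epsilon_{n},\infty)$ and the infimum is at $x_{1}=a_{n}+\epsilon_{n}$. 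The $B$ piece needs no such care since there $(na_{n}-x_{1})/(n-1)\geq a_{n}$ throughout. With that observation the proof is complete.
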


\begin{theorem}
\label{theoremConvex}Let $X_{1},...,X_{n}$ be i.i.d. copies of a r.v. $X$ with
density $p(x)=c\exp\left(  {-g(x)}\right)  $, where $g(x)$ is a positive
convex function on $\mathbb{R}^{+}.$ Assume that $g$ is increasing on some
interval $[X,\infty)$ and satisfies
\[
\lim_{x\rightarrow\infty}g(x)/x=\infty.
\]
Let $a_{n}$ satisfy
\[
\lim\inf_{n\rightarrow\infty}\frac{\log a_{n}}{\log n}n>0
\]
and that for some positive sequence $\epsilon_{n}$
\begin{align}
\lim_{n\rightarrow\infty}\frac{n\log g\left(  a_{n}+\epsilon_{n}\right)
}{H(a_{n},\epsilon_{n})}  &  =0,\label{condition32}\\
\lim_{n\rightarrow\infty}\frac{nG(a_{n})}{H(a_{n},\epsilon_{n})}  &  =0,
\label{condition33}%
\end{align}
where
\begin{align*}
&  H(a_{n},\epsilon_{n})=\min\left(  F_{g_{1}}(a_{n},\epsilon_{n}),F_{g_{2}%
}(a_{n},\epsilon_{n})\right)  -ng(a_{n}),\\
&  G(a_{n})=g(a_{n}+\frac{1}{g(a_{n})})-g(a_{n}),
\end{align*}
where $F_{g_{1}}(a_{n},\epsilon_{n})$ and $F_{g_{2}}(a_{n},\epsilon_{n})$ are
defined as in Lemma $\ref{Lemma01}$.Then as $n\rightarrow\infty$ it holds
$P(I|C)\rightarrow1$.
\end{theorem}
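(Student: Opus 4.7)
The plan is to reduce the claim $P(I|C)\to 1$ to the bound $P(I^{c}\cap C)/P(C)\to 0$ by producing matching Laplace-type estimates for numerator and denominator, each with exponential rate governed by the infimum $I_{g}$ of $\sum g(x_{i})$ on the relevant set, and then checking via (\ref{condition32})--(\ref{condition33}) that the polynomial and sub-exponential correction terms are dominated by the exponential gap $H(a_{n},\epsilon_{n})$.

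For the denominator I would use the inclusion $C\supset\cap_{i}\{X_{i}>a_{n}\}$ together with the short-interval lower bound
\[
P(X>a_{n})\geq\int_{a_{n}}^{a_{n}+1/g(a_{n})}c\,e^{-g(x)}\,dx\geq\frac{c}{g(a_{n})}\exp\bigl(-g(a_{n}+1/g(a_{n}))\bigr)=\frac{c}{g(a_{n})}e^{-g(a_{n})-G(a_{n})},
\]
which is valid for large $n$ since $g$ is increasing on $[X,\infty)$. Raising this to the $n$-th power yields $-\log P(C)\leq ng(a_{n})+nG(a_{n})+n\log g(a_{n})+O(n)$, where the $nG(a_n)$ term is exactly what (\ref{condition33}) is designed to absorb.

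For the numerator, by symmetry and a union bound, $P(I^{c}\cap C)\leq nP(\{X_{1}\notin(a_{n}-\epsilon_{n},a_{n}+\epsilon_{n})\}\cap C)$. I would split this according to $X_{1}>a_{n}+\epsilon_{n}$ or $X_{1}<a_{n}-\epsilon_{n}$, condition on $X_{1}=x_{1}$, and apply Jensen's inequality to the convex $g$ to obtain $P(S_{2}^{n}>(n-1)b)\leq V_{n}(b)\exp(-(n-1)g(b))$, where $V_{n}(b)$ is the volume factor from the $(n-2)$-simplex $\{\sum_{i=2}^{n}x_{i}=(n-1)u\}$ and satisfies $\log V_{n}(b)=O(n\log b)$ after Stirling. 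Integrating the resulting bound over $x_{1}$ reduces the exponent, by the very convexity argument used in Lemma~\ref{Lemma01}, to $F_{g_{1}}(a_{n},\epsilon_{n})$ or $F_{g_{2}}(a_{n},\epsilon_{n})$ according to the case, giving
\[
P(I^{c}\cap C)\leq nV_{n}^{\ast}\exp\bigl(-\min(F_{g_{1}},F_{g_{2}})\bigr)=nV_{n}^{\ast}\exp\bigl(-ng(a_{n})-H(a_{n},\epsilon_{n})\bigr),
\]
with $\log V_{n}^{\ast}=O(n\log g(a_{n}+\epsilon_{n}))$ once the superlinearity $g(x)/x\to\infty$ is invoked to absorb the polynomial factor $(a_{n}+\epsilon_{n})^{n}$ into $g(a_{n}+\epsilon_{n})^{n}$.

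Taking the ratio gives $\log P(I^{c}|C)\leq -H(a_{n},\epsilon_{n})+nG(a_{n})+O(n\log g(a_{n}+\epsilon_{n}))+O(\log n)$, and the hypotheses (\ref{condition32})--(\ref{condition33}) drive this to $-\infty$; the growth condition on $a_{n}$ ensures that the $O(\log n)$ remainder is negligible against $n\log g(a_{n})$. The hard part will be the careful bookkeeping of the volume factor $V_{n}^{\ast}$: one needs a bound that is genuinely only polynomial in $a_{n}+\epsilon_{n}$ and factorial in $n$, so that after Stirling it sits inside the envelope $n\log g(a_{n}+\epsilon_{n})$ dictated by (\ref{condition32}). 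A secondary delicate point is the identification on each slab $\{|x_{1}-a_{n}|\geq\epsilon_{n}\}$ of the constrained minimizer of $g(x_{1})+(n-1)g((na_{n}-x_{1})/(n-1))$: the unconstrained minimum sits at $x_{1}=a_{n}$ by strict convexity, so the constraint is active at its boundary, which is precisely why Lemma~\ref{Lemma01} produces the two candidate values $F_{g_{1}}$ and $F_{g_{2}}$.
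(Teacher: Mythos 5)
Your overall reduction to $P(I^{c}\cap C)/P(C)\to 0$ and your plan of matching Laplace-type upper and lower bounds, each with exponential rate governed by the $I_{g}$-infimum and with a correction term to be controlled by (\ref{condition32})--(\ref{condition33}), is exactly the strategy of the paper. Your lower bound on $P(C)$ is essentially identical to the paper's: they too integrate the density over the product box $[a_{n},a_{n}+1/g(a_{n})]^{n}$, producing the same $-ng(a_{n})-nG(a_{n})-n\log g(a_{n})$ envelope, where $nG(a_{n})$ is handled by (\ref{condition33}) and $n\log g(a_{n})$ by (\ref{condition32}).

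Where you part ways with the paper is in the upper bound for $P(I^{c}\cap C)$. The paper does not use a union bound or condition on $X_{1}$: it works with the $n$-dimensional representation
\[
P(A)=\exp(-I_{g}(A))\int_{0}^{\infty}\mathrm{Volume}\bigl(A\cap S_{g}(I_{g}(A)+s)\bigr)e^{-s}\,ds,
\]
observes that positivity of $g$ and $g(x)/x\to\infty$ give $S_{g}(r)\subset[0,r]^{n}$, hence $\mathrm{Volume}\le(I_{g}(A)+s)^{n}$, and then integrates by parts to get the clean polynomial correction $n\log I_{g}(I^{c}\cap C)+\log(n+1)$; the exponential rate $I_{g}(I^{c}\cap C)=\min(F_{g_{1}},F_{g_{2}})$ comes directly from Lemma~\ref{Lemma01}. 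Your route --- union bound over the index of the offending coordinate, then condition on $X_{1}=x_{1}$, bound the $(n-1)$-dimensional tail, and integrate in $x_{1}$ --- ought to give the same rate, but beware that the intermediate claim $P(S_{2}^{n}>(n-1)b)\le V_{n}(b)\exp(-(n-1)g(b))$ is not obtainable by Jensen alone, since $\{\sum_{i\ge 2}x_{i}>(n-1)b\}$ has infinite Lebesgue volume; you would essentially have to redeploy the paper's level-set volume bound in dimension $n-1$, and then you also face a one-dimensional Laplace integral over $x_{1}$ whose decay rate near the boundary $x_{1}=a_{n}+\epsilon_{n}$ is governed by a possibly small derivative gap $g'(a_{n}+\epsilon_{n})-g'(a_{n}-\epsilon_{n}/(n-1))$. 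None of this is fatal --- it is exactly the bookkeeping you flag as the hard part --- but the paper's direct application of the level-set argument to the full set $I^{c}\cap C$, combined with Lemma~\ref{Lemma01}, bypasses both the union bound's extra factor of $n$ and the one-dimensional integral, and is therefore the more economical execution of what is at heart the same idea.
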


\begin{example}
\label{1ex01} Let $g(x):=x^{\beta}$. For power functions,through Taylor
expansion it holds
\[
g\left(  a_{n}+\frac{1}{g(a_{n})}\right)  -g(a_{n})=\frac{\beta}{a_{n}%
}+o\left(  \frac{1}{a_{n}}\right)  =o\left(  \log g(a_{n})\right)
\]
hence condition $(\ref{condition33})$ holds as a consequence of
$(\ref{condition32})$. If we assume that $\epsilon_{n}=o(a_{n})$, by Taylor
expansion we obtain
\[
\min\big(F_{g_{1}}(a_{n},\epsilon_{n}),F_{g_{2}}(a_{n},\epsilon_{n}%
)\big)=na_{n}^{\beta}+C_{\beta}^{2}\frac{n}{n-1}a_{n}^{\beta-2}\epsilon
_{n}^{2}+o(a_{n}^{\beta-2}\epsilon_{n}^{2}).
\]
Condition $(\ref{condition32})$ then becomes
\[
\lim_{n\rightarrow\infty}\frac{n\log a_{n}}{a_{n}^{\beta-2}\epsilon_{n}^{2}%
}=0.
\]

\textbf{Case 1:} $1<\beta\leq2$.

To make $(\ref{condition33})$ hold, we need $\epsilon_{n}$ be large enough,
specifically,
\[
a_{n}^{1-\frac{\beta}{2}}\sqrt{\log a_{n}}=o\left(  \epsilon_{n}\right)
=o\left(  a_{n}\right)
\]
which shows that $\epsilon_{n}\rightarrow\infty.$

\textbf{Case 2:} $\beta>2$.

In this case, if we take $n=a_{n}^{\alpha}$ with $0<\alpha<\beta-2$, then
condition $(\ref{condition33})$ holds for arbitrary sequences $\epsilon_{n}$
bounded by below away from $0.$ The sequence $\epsilon_{n}$ may also tend to
$0;$ indeed with $\epsilon_{n}=O(1/\log a_{n})$, condition $(\ref{condition33}%
)$\ holds. Also setting $a_{n}:=n^{\alpha}$ for $\alpha>0$ there exist
sequences $\epsilon_{n}$ which tend to $0$ such that the conclusion in Theorem
\ref{theoremConvex} holds.

\begin{example}
\label{1ex02} \textbf{Let } $g(x):=e^{x}$. Through Taylor expansion
\[
g\left(  a_{n}+\frac{1}{g(a_{n})}\right)  -g(a_{n})=1+o\left(  \frac{1}{a_{n}%
}\right)  =o\left(  \log g(a_{n})\right)  =o\left(  a_{n}\right)  ,
\]
and if $\epsilon_{n}\rightarrow0$, it holds
\[
\min\left(  F_{g_{1}}(a_{n},\epsilon_{n}),F_{g_{2}}(a_{n},\epsilon
_{n})\right)  =ne^{a_{n}}+\frac{1}{2}\frac{n}{n-1}e^{a_{n}}\epsilon_{n}%
^{2}+o(e^{a_{n}}\epsilon_{n}^{2}).
\]
Hence condition $(\ref{condition33})$ follows from condition
$(\ref{condition32});$ furthermore condition $(\ref{condition32})$ follows
from
\[
\lim_{n\rightarrow\infty}\frac{na_{n}}{e^{a_{n}}\epsilon_{n}^{2}}=0
\]
if we set $a_{n}:=n^{\alpha}$ where $\alpha>0$ then condition
$(\ref{condition33})$ holds, and $\epsilon_{n}$ is rapidly decreasing to $0;$
indeed we may choose $\epsilon_{n}=o(\exp(-{a_{n}}/{4}))$.
\end{example}
\end{example}

\begin{corollary}
\label{order of s} Let $X_{1},..,X_{n\text{ }}$be independent r.v's with
common Weibull density with shape parameter $k$ and scale parameter $1,$
\[
p(x)=%
\begin{cases}
kx^{k-1}e^{-x^{k}}\quad & when\;x>0\\
0 & otherwise,
\end{cases}
\]
where $k>2$. Let
\[
a_{n}=n^{\frac{1}{\alpha}},
\]
for some $0<\alpha<k-2$ and let $\epsilon_{n}$ be a positive sequence such
that
\[
\lim_{n\rightarrow\infty}\frac{n\log a_{n}}{a_{n}^{k-2}\epsilon_{n}^{2}}=0.
\]
Then%
\[
\lim_{n\rightarrow\infty}P(I|C)=0.
\]
.
\end{corollary}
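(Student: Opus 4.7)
The plan is to identify the Weibull density with the setup of Theorem \ref{theoremConvex} and to verify each of its hypotheses for the specific $a_{n}$ and $\epsilon_{n}$ given here, after which the conclusion follows by direct application. Writing $p(x)=\exp(-h(x))$ with $h(x)=x^{k}-(k-1)\log x-\log k$ on $(0,\infty)$, I first note that $h''(x)=k(k-1)x^{k-2}+(k-1)/x^{2}>0$, so $h$ is strictly convex; that $h$ is eventually increasing; and that $h(x)/x\sim x^{k-1}\to\infty$. The assumption $\liminf_{n}(\log a_{n}/\log n)\,n>0$ is immediate from $a_{n}=n^{1/\alpha}$.

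The bulk of the work is to verify (\ref{condition32}) and (\ref{condition33}) with $g=h$. Expanding $F_{g_{1}}(a_{n},\epsilon_{n})$ and $F_{g_{2}}(a_{n},\epsilon_{n})$ from Lemma \ref{Lemma01} to second order at $a_{n}$, the linear contributions cancel and I obtain
\[
H(a_{n},\epsilon_{n})=\tfrac{1}{2}\tfrac{n}{n-1}h''(a_{n})\,\epsilon_{n}^{2}+o\bigl(h''(a_{n})\epsilon_{n}^{2}\bigr)\sim \tfrac{k(k-1)}{2}\,a_{n}^{k-2}\epsilon_{n}^{2},
\]
the correction $(k-1)/a_{n}^{2}$ in $h''(a_{n})$ being negligible against $k(k-1)a_{n}^{k-2}$ because $k>2$. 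Since $\log h(a_{n}+\epsilon_{n})\sim k\log a_{n}$, condition (\ref{condition32}) reduces, up to a positive constant, to $n\log a_{n}/(a_{n}^{k-2}\epsilon_{n}^{2})\to 0$, which is precisely the hypothesis of the corollary. For (\ref{condition33}) I would estimate $G(a_{n})=h(a_{n}+1/h(a_{n}))-h(a_{n})\sim h'(a_{n})/h(a_{n})\sim k/a_{n}$, so that
\[
\frac{nG(a_{n})}{H(a_{n},\epsilon_{n})}\sim\frac{n}{a_{n}^{k-1}\epsilon_{n}^{2}}=\frac{1}{a_{n}\log a_{n}}\cdot\frac{n\log a_{n}}{a_{n}^{k-2}\epsilon_{n}^{2}}\longrightarrow 0,
\]
and hence (\ref{condition33}) is a consequence of (\ref{condition32}).

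Finally I would verify that the prescribed range $0<\alpha<k-2$ is exactly what makes the hypothesis on $\epsilon_{n}$ realizable for bounded or vanishing $\epsilon_{n}$: substituting $a_{n}=n^{1/\alpha}$, the hypothesis becomes $\log n/(n^{(k-2)/\alpha-1}\epsilon_{n}^{2})\to 0$, which admits such $\epsilon_{n}$ if and only if $(k-2)/\alpha>1$. At this point Theorem \ref{theoremConvex} applies and delivers the conclusion.

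The step requiring the most care will be the bookkeeping of the non-power correction $-(k-1)\log x$ in $h$: the Taylor expansions must be performed for the full $h$ and not only for its dominant term $x^{k}$ as in Example \ref{1ex01}, and I need to confirm that this correction is dominated both in the expansion driving $H$ and in the estimate of $G$. Qualitatively this is clear once $k>2$, but explicit remainder estimates on $[a_{n}-\epsilon_{n},a_{n}+\epsilon_{n}]$ still have to be produced before invoking the theorem.
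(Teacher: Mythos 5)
Your proof is correct and follows essentially the same route as the paper: both set $g(x)=x^{k}-(k-1)\log x$ (convex for $k>2$), use a second-order Taylor expansion to get $H(a_{n},\epsilon_{n})\sim\tfrac{k(k-1)}{2}a_{n}^{k-2}\epsilon_{n}^{2}$ and $\log g(a_{n}+\epsilon_{n})\sim k\log a_{n}$, and then read off conditions (\ref{condition32}) and (\ref{condition33}) of Theorem~\ref{theoremConvex} from the hypothesis on $\epsilon_{n}$, the paper treating (\ref{condition33}) via $nG(a_{n})=o(n)$ while you factor $nG(a_{n})/H$ directly. Note that the stated conclusion $\lim P(I|C)=0$ is a typo for $\lim P(I|C)=1$, which is what Theorem~\ref{theoremConvex} delivers and what your argument (like the paper's) actually establishes.
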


Proof: Set $g(x)=x^{k}-(k-1)\log x$, which is a convex function for $k>2$.
Also when $x\rightarrow\infty$, $g^{\prime}(x)$ and $g^{\prime\prime}(x)$ are
both infinitely small with respect to $g(x)$ as $x\rightarrow\infty.$

Both conditions $(\ref{condition32})$ and $(\ref{condition33})$ in Theorem
$\ref{theoremConvex}$ are satisfied. As regards to condition
$(\ref{condition33})$, notice firstly that, under the Weibull density by
Taylor expansion
\[
g(a_{n}+\epsilon_{n})=g(a_{n})+g^{\prime}(a_{n})\epsilon_{n}+\frac{1}%
{2}g^{\prime\prime}(a_{n})\epsilon_{n}^{2}+o\left(  g^{\prime\prime}%
(a_{n})\epsilon_{n}^{2}\right)  .
\]
Hence it holds
\[
\log g\left(  a_{n}+\epsilon_{n}\right)  \leq\log\left(  3g(a_{n})\right)
\leq\log\left(  3a_{n}^{k}\right)  =\log3+k\log a_{n}.
\]
Using Taylor expansion in $g(a_{n}+\epsilon_{n})$ and $g\left(  a_{n}%
-\frac{\epsilon_{n}}{n-1}\right)  $, it holds
\begin{align*}
F_{g_{1}}(a_{n},\epsilon_{n}) &  -ng(a_{n})=g(a_{n}+\epsilon_{n}%
)+(n-1)g\left(  a_{n}-\frac{\epsilon_{n}}{n-1}\right)  -ng(a_{n})\\
&  =\left(  g(a_{n})+g^{\prime}(a_{n})\epsilon_{n}+\frac{1}{2}g^{\prime\prime
}(a_{n})\epsilon_{n}^{2}+o\left(  g^{\prime\prime}(a_{n})\epsilon_{n}%
^{2}\right)  \right)  \\
&  \quad+\left(  (n-1)g(a_{n})-g^{\prime}(a_{n})\epsilon_{n}+\frac{1}%
{2}g^{\prime\prime}(a_{n})\frac{\epsilon_{n}^{2}}{n-1}+o\left(  g^{\prime
\prime}(a_{n})\epsilon_{n}^{2}\right)  \right)  -ng(a_{n})\\
&  \geq\frac{1}{2}g^{\prime\prime}(a_{n})\epsilon_{n}^{2}+o\left(
g^{\prime\prime}(a_{n})\epsilon_{n}^{2}\right)  =\frac{k(k-1)}{2}a_{n}%
^{k-2}\epsilon_{n}^{2}+o\left(  a_{n}^{k-2}\epsilon_{n}^{2}\right)  .
\end{align*}
In the same way, it holds when $a_{n}\rightarrow\infty$
\[
F_{g_{2}}(a_{n},\epsilon_{n})-ng(a_{n})\geq\frac{k(k-1)}{2}a_{n}^{k-2}%
\epsilon_{n}^{2}+o\left(  a_{n}^{k-2}\epsilon_{n}^{2}\right)  .
\]
Thus we have
\[
H(a_{n},\epsilon_{n})\geq\frac{k(k-1)}{2}a_{n}^{k-2}\epsilon_{n}^{2}+o\left(
a_{n}^{k-2}\epsilon_{n}^{2}\right)  .
\]
Hence, when $n\rightarrow\infty$, with $(\ref{convexCorralory01}%
),(\ref{convexCorralory02})$, the condition $(\ref{condition32})$ of Theorem
$(\ref{theoremConvex})$ becomes
\begin{align*}
\frac{n\log g\left(  a_{n}+\epsilon_{n}\right)  }{H(a_{n},\epsilon_{n})} &
\leq\frac{n\log3+kn\log a_{n}}{\frac{k(k-1)}{2}a_{n}^{k-2}\epsilon_{n}%
^{2}+o\left(  a_{n}^{k-2}\epsilon_{n}^{2}\right)  }\\
&  \leq\frac{2kn\log a_{n}}{\frac{k(k-1)}{4}a_{n}^{k-2}\epsilon_{n}^{2}}%
=\frac{8}{k-1}\frac{n\log a_{n}}{a_{n}^{k-2}\epsilon_{n}^{2}}\longrightarrow0.
\end{align*}

The last step holds from condition $(\ref{X order's condition 1})$. As for
condition $(\ref{condition33})$ of Theorem $(\ref{theoremConvex})$, when
$a_{n}\rightarrow\infty$, it holds
\begin{align*}
nG(a_{n})  &  =ng\left(  a_{n}+\frac{1}{g(a_{n})}\right)  -ng(a_{n})\\
&  =ng(a_{n})+n\frac{g^{\prime}(a_{n})}{g(a_{n})}+o\left(  \frac{g^{\prime
}(a_{n})}{g(a_{n})}\right)  -ng(a_{n})\\
&  =n\frac{g^{\prime}(a_{n})}{g(a_{n})}+o\left(  \frac{g^{\prime}(a_{n}%
)}{g(a_{n})}\right)  =o(n).
\end{align*}
Hence under condition $(\ref{X order's condition 1})$, it holds $nG(a_{n}%
)=o(H(a_{n},\epsilon_{n}))$, which means that condition $(\ref{condition33})$
of Theorem $\ref{theoremConvex}$ holds under condition
$(\ref{X order's condition 1})$, which completes the proof.

\section{Very Large Deviation for Exponential Density Functions associated to
non-convex Functions}

In this section, we pay attention to exponential density functions whose
exponents are non-convex functions. Namely, i.i.d random variables
$X_{1},...,X_{n}$ have common density with
\[
f(x)=c\exp\Big(-\left(  g(x)+q(x)\right)  \Big)
\]
assuming that the convex function $g$ is twice differentiable and $q(x)$ is of
smaller order than $\log g(x)$ for large $x$.

\bigskip

\begin{theorem}
\label{THM2}$X_{1},...,X_{n}$ are i.i.d. real valued random variables with
common density $f(x)=c\exp\left(  -(g(x)+q(x))\right)  $, where $g(x)$ is some
positive convex function on $\mathbb{R}^{+}$ and $g$ is twice differentiable.
Assume \ that $on[X,\infty)$, $g(x)$ is increasing on $[X,\infty)$ and
satisfies
\[
\lim_{x\rightarrow\infty}g(x)/x=\infty.
\]
Let $M(x)$ be some nonnegative continuous function on $\mathbb{R}^{+}$ for
which
\[
-M(x)\leq q(x)\leq M(x)\text{ \ \ \ for all positive }x
\]
together with
\begin{align}
\label{thcond011}M(x)=O\left(  \log g(x)\right)
\end{align}
as $x\rightarrow\infty.$

Let $a_{n}$ be some positive sequence such that $a_{n}\rightarrow\infty$ and
$\epsilon_{n}=o(a_{n})$ be a positive sequence. Assume
\begin{align}
\lim\inf_{n\rightarrow\infty}  &  \frac{\log g(a_{n})}{\log n}%
>0\label{thcond12}\\
\lim_{n\rightarrow\infty}  &  \frac{n\log g\left(  a_{n}+\epsilon_{n}\right)
}{H(a_{n},\epsilon_{n})}=0,\label{thcond13}\\
\lim_{n\rightarrow\infty}  &  \frac{nG(a_{n})}{H(a_{n},\epsilon_{n})}=0,
\label{thcond14}%
\end{align}
where
\begin{align*}
&  H(a_{n},\epsilon_{n})=\min\left(  F_{g_{1}}(a_{n},\epsilon_{n}),F_{g_{2}%
}(a_{n},\epsilon_{n})\right)  -ng(a_{n}),\\
&  G(a_{n})=g\left(  a_{n}+\frac{1}{g(a_{n})}\right)  -g(a_{n}),
\end{align*}
where $F_{g_{1}}(a_{n},\epsilon_{n})$ and $F_{g_{2}}(a_{n},\epsilon_{n})$ are
defined as in Lemma $\ref{Lemma01}$.

Then it holds
\[
P(I|C)\rightarrow1\text{ \ \ when }n\rightarrow\infty.
\]

\end{theorem}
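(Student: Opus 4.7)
The plan is to adapt the proof of Theorem \ref{theoremConvex} by treating the non--convex correction $q$ as a controlled multiplicative perturbation of the log--concave reference density $p_{0}(x)\propto e^{-g(x)}$. Writing $P(I|C)=1-P(I^{c}\cap C)/P(C)$, it suffices to prove that the ratio on the right tends to $0$. Since $|q(x)|\le M(x)$ with $M(x)=O(\log g(x))$, the factor $e^{-q(x)}$ is squeezed between $g(x)^{-C}$ and $g(x)^{C}$ for some constant $C$ and for $x$ large enough, which suggests that $q$ can contribute at most a polynomial--in--$g$ correction to both the numerator and the denominator of the ratio.

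For the numerator, I would start from
\[
P(I^{c}\cap C)\le c^{n}\int_{I^{c}\cap C}\exp\!\Big(-\sum_{i}g(x_{i})+\sum_{i}M(x_{i})\Big)\,d\mathbf{x},
\]
and perform a Laplace--type analysis at the constrained minimizer of $I_{g}$ on $I^{c}\cap C$ identified by Lemma \ref{Lemma01}. That minimizer places all but one coordinate near $a_{n}\pm\epsilon_{n}/(n-1)$ and the remaining one near $a_{n}\mp\epsilon_{n}$; on an effective neighbourhood of this saddle the correction $\sum_{i}M(x_{i})$ is of order $n\log g(a_{n}+\epsilon_{n})$, so that
\[
P(I^{c}\cap C)\le \exp\!\Big(-ng(a_{n})-H(a_{n},\epsilon_{n})+O(n\log g(a_{n}+\epsilon_{n}))\Big)\cdot R_{n},
\]
where $R_{n}$ is polynomial in $n$ and is absorbed thanks to condition (\ref{thcond12}).

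For the denominator I would restrict to a small cube centred at the democratic point $x_{i}^{\ast}=a_{n}+1/g(a_{n})$, which sits inside $C$. Using the convexity of $g$ and the bound $|q|\le M=O(\log g)$,
\[
P(C)\ge \exp\!\Big(-ng(a_{n})-nG(a_{n})-O(n\log g(a_{n}))\Big)\cdot V_{n},
\]
for a cube of volume $V_{n}$ whose logarithm is also $O(n\log g(a_{n}))$. Combining the two estimates, the logarithm of the ratio $P(I^{c}\cap C)/P(C)$ is controlled by $-H(a_{n},\epsilon_{n})+nG(a_{n})+O(n\log g(a_{n}+\epsilon_{n}))$, which tends to $-\infty$ by hypotheses (\ref{thcond13}) and (\ref{thcond14}), yielding the claim.

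The main obstacle will be to justify rigorously the Laplace reduction around the minimizer in the presence of the non--convex perturbation $q$: because $q$ need not be convex, standard quadratic--expansion and Gaussian--tail arguments are not immediate, and one must exploit the strict convexity of $g$ together with the logarithmic smallness of $M$ to show that the saddle--point contribution is not distorted and that the effective integration neighbourhood can be chosen small enough for the expansion to be accurate, yet large enough to capture the whole exponential mass. A secondary technical point is to show that configurations in which some coordinate $x_{i}$ drifts far above $a_{n}+\epsilon_{n}$ contribute negligibly; this relies on the superlinear growth assumption $g(x)/x\to\infty$, which ensures integrable tails even after multiplying by $e^{M(x)}=g(x)^{O(1)}$.
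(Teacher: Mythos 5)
Your high-level reduction is the right one — show $P(I^{c}\cap C)/P(C)\to 0$ by getting an upper bound of the form $\exp(-ng(a_{n})-H+\text{small})$ for the numerator and a lower bound of the form $\exp(-ng(a_{n})-nG(a_{n})-\text{small})$ for the denominator — and your choice of a small cube around the ``democratic'' point for the denominator is essentially what the paper does. But the route you propose for the numerator has a genuine gap that you yourself flag without resolving. A Laplace/saddle-point analysis requires that the integral over $I^{c}\cap C$ of $\exp(-\sum_{i}(g(x_{i})-M(x_{i})))$ be governed by the minimizer of $\sum_{i}(g(x_{i})-M(x_{i}))$, whereas Lemma \ref{Lemma01} only identifies the minimizer of $\sum_{i}g(x_{i})$, and only under convexity. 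Since $g-M$ need not be convex, you cannot invoke Lemma \ref{Lemma01} for it, you cannot apply Jensen to localize the minimizer, and you cannot expand quadratically about a point you have not shown to be the true minimizer. The claim that ``on an effective neighbourhood of this saddle the correction $\sum_{i}M(x_{i})$ is of order $n\log g(a_{n}+\epsilon_{n})$'' is exactly the statement that needs proof, not a starting point; $I^{c}\cap C$ is a large unbounded set and the infimum of a non-convex functional over it could, a priori, sit elsewhere.

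The paper sidesteps all of this with two devices you do not have. First, it never does a Laplace expansion: it uses the layer-cake identity
\[
P(A)=\exp(-I_{g,q}(A))\int_{0}^{\infty}\mathrm{Volume}\bigl(A\cap S_{g,q}(I_{g,q}(A)+s)\bigr)\,e^{-s}\,ds
\]
and then bounds the volume of the sublevel set $S_{g,q}(r)$ crudely by a cube of side $2r$ (using $g(x)/x\to\infty$ to absorb $M$), giving the factor $(n+1)(2\,I_{g,q}(A))^{n}$ with no curvature analysis at all. Second, and crucially, to lower-bound $I_{g,q}(I^{c}\cap C)$ it \emph{constructs} an auxiliary convex function $h$ with $h\le g-M$ on $(0,\infty)$ (taking $h=r:=g-N\log g$ for large $x$ and a tangent line below the threshold), and then applies Lemma \ref{Lemma01} to $h$, not to $g-M$; this yields $I_{g,q}(I^{c}\cap C)\ge \min(F_{g_{1}},F_{g_{2}})-nN\log g(a_{n}+\epsilon_{n})$. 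That convex-minorant construction is the missing ingredient in your proposal: it is what replaces the delicate non-convex saddle-point argument and makes the whole bound go through with only elementary volume estimates. Without it, the ``main obstacle'' you identify remains open, and the proof is incomplete.
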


We now provide examples of densities which define r.v's $X_{i}^{\prime}$'s for
which the above Theorem \ref{THM2} applies. These densities appear in a number
of questions pertaining to uniformity in large deviation approximations; see
\cite{JE1995} Ch 6.

\begin{example}
\textbf{Almost Log-concave densities}: $p$ can be written as
\[
p(x)=c(x)\exp-h(x),\text{ \ \ \ \ }x<\infty
\]
with $h$ a convex function, and where for some $x_{0}>0$ and constants
$0<c_{1}<c_{2}<\infty,$ we have%
\[
c_{1}<c(x)<c_{2}\text{ for }x_{0}<x<\infty.
\]
Densities which satisfy the above condition include the Normal, the Gamma, the
hyperbolic density, etc.
\end{example}

\begin{example}
\textbf{Gamma-like densities} are defined through densities of the form
\[
p(x)=c(x)\exp-h(x)
\]
for all $x>0$, with $0<c_{1}<c(x)<c_{2}\leq\infty$ when $x$ is larger than
some $x_{0}>0$ and $h(x)$ is a convex function which satisfies $h(x)=\tau
+h_{1}(x)$ with, for $x_{1}<x_{2}$,
\[
a_{1}\log\frac{x_{2}}{x_{1}}-b_{1}<h_{1}(x_{2})-h_{1}(x_{1})<a_{2}\log
\frac{x_{2}}{x_{1}}-b_{2}%
\]
where $a_{1},a_{2},b_{1}$ and $b_{2}$ are positive constants with $a_{2}<1.$
\end{example}

A wide class of densities for which our results apply is when there exist
constants $x_{0}>0$, $\alpha>0,$ $\tau>0$ and $A$ such that
\[
p(x)=Ax^{\alpha-1}l(x)\exp\left(  -\tau x\right)  \text{ \ \ \ }x>x_{0}
\]
where $l(x)$ is slowly varying at infinity.

\begin{example}
\label{2ex04} \textbf{Almost Log-concave densities 1}: $p$ can be written as
\[
p(x)=c(x)\exp-g(x),\text{ \ \ \ \ }0<x<\infty
\]
with $g$ a convex function, and where for some $x_{0}>0$ and constants
$0<c_{1}<c_{2}<\infty,$ we have%
\[
c_{1}<c(x)<c_{2}\text{ for }x_{0}<x<\infty,
\]
and $g(x)$ is increasing on some interval $[X,\infty)$ and satisfies
\[
\lim_{x\rightarrow\infty}g(x)/x=\infty.
\]
Examples of densities which satisfy the above conditions include the Normal,
the hyperbolic density, etc.
\end{example}

\begin{example}
\label{2ex05} \textbf{Almost Log-concave densities 2}: A wide class of
densities for which our results apply is when there exist constants $x_{0}>0$,
$\alpha>0,$ and $A$ such that
\[
p(x)=Ax^{\alpha-1}l(x)\exp\left(  -g(x)\right)  \text{ \ \ \ }x>x_{0}%
\]
where $l(x)$ is slowly varying at infinity, $g$ a convex function, increasing
on some interval $[X,\infty)$ and satisfies
\[
\lim_{x\rightarrow\infty}g(x)/x=\infty.
\]

\end{example}

\begin{remark}
All density functions in Examples $(\ref{2ex04})$ $(\ref{2ex05})$ satisfy the
assumptions of the above Theorem \ref{THM2} . Also the conditions in Theorem
\ref{THM2} about $a_{n}$ and $\epsilon_{n}$ are the same as those in the
convex case, so that if $g(x)$ is some power function with index larger than
$2$, $\epsilon_{n}$ can go to $0$ more rapidly than $O(1/\log a_{n})$(see
Example $\ref{1ex01}$); If $g(x)$ is of exponential function form,
$\epsilon_{n}$ goes to $0$ more rapidly than any power $1/a_{n}$ (see Example
$\ref{1ex02}$ ).*
\end{remark}

\section{Application}

An extended LDP holds for the partial sum $S_{1}^{n}$ where the i.i.d.
summands $X_{i}$'s are unbounded above whenever
\[
\lim_{n\rightarrow\infty}-\frac{\log P\left(  S_{1}^{n}/n>x_{n}\right)
}{I(x_{n})}=1
\]
holds where $\lim_{n\rightarrow\infty}x_{n}=+\infty.$ In the above display the
Cramer function $I(x)$ is defined for all $x>EX$ through
\[
I(x):=\sup_{t}tx-\log E\exp tX.
\]
Thne following result holds (see \cite{BRMA1994}, Proposition 1.1). Assume
that $X$ is unbounded above and satisfies the Cramer condition. Assume further
that
\begin{equation}
-\log P\left(  X>x)=I(x)(1+o(1))\right)  \label{condBrMa}%
\end{equation}
as $x\rightarrow\infty.$ Then for any sequence $a_{n}$ going to infinity with
$n$ it holds%
\begin{equation}
-\log P\left(  S_{1}^{n}/n>a_{n})=nI(a_{n})(1+o(1))\right)  \label{BrMa}%
\end{equation}
as $n\rightarrow\infty.$ \ It is readily seen that (\ref{condBrMa}) holds in
any of the cases considered in the present paper (see \cite{BRMA1994}, Remark
1.1). See also \cite{BO1978} for a sharp result.

We now consider the local behaviour of a random walk with independent summands
$X_{i}$, $1\leq i\leq n$ which are identically distributed as $X.$ Let $a>EX$
. We consider random paths $T_{n}:=\left(  S_{1}^{1},S_{1}^{2},..,S_{1}%
^{n}\right)  $ which satisfy $\left(  S_{1}^{n}>na\right)  $ hence under
a\textit{ large deviation condition} pertaining to the end value. In the
following result we state that the trajectory $T_{n}$ exhibits a peculiar feature.

Let $k=k_{n}$ be an integer sequence such that $\lim_{n\rightarrow\infty
}k=\infty$ together with $\lim_{n\rightarrow\infty}k/n=0$, and $\alpha
_{n}\rightarrow\infty$ such that
\[
\lim_{n\rightarrow\infty}\frac{na-k\alpha_{k}}{n-k}=\infty.
\]
Denote $A_{k}$ the event%
\[
A_{k}:=\left(  \text{there exists }j\text{, }1\leq j\leq n-k\text{ such that
}\Delta_{j,k}>\alpha_{k}\right)  .
\]

It holds

\begin{proposition}
When $X$ satisfies the hypotheses in Theorem \ref{THM2} it holds
\[
P\left(  \left.  A_{k}\right\vert S_{1}^{n}>na\right)  \rightarrow1.
\]

\end{proposition}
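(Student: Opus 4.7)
The plan is to reduce the assertion to Theorem~\ref{THM2} via a disjoint-block decomposition of the random walk. The heuristic is that, among all configurations realizing the large deviation $\{S_1^n>na\}$, the probability cost of placing one extended-LDP "spike" inside a single length-$k$ block is matched, once one sums over the $m=\lfloor n/k\rfloor$ possible block positions, by the unconditional LDP cost of the global deviation.

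First I would partition $\{1,\ldots,n\}$ into $m=\lfloor n/k\rfloor$ non-overlapping blocks $B_1,\ldots,B_m$ of length $k$ (plus a remainder of length less than $k$) and set $\Sigma_j=\sum_{i\in B_j}X_i$, so that the $\Sigma_j$ are i.i.d. Since $\bigcup_{j=1}^m\{\Sigma_j>k\alpha_k\}\subseteq A_k$, it suffices to prove
\[
\frac{P\bigl(\bigcup_{j=1}^m\{\Sigma_j>k\alpha_k\},\,S_1^n>na\bigr)}{P(S_1^n>na)}\longrightarrow 1.
\]
The denominator is controlled directly by the extended LDP of \cite{BRMA1994} recalled at the beginning of the section, giving $-\log P(S_1^n>na)=nI(a)(1+o(1))$ with $I$ the Cramer transform of $X$.

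For the numerator, I would use the independence of $\Sigma_j$ from the complementary sum $S^{(j)}:=S_1^n-\Sigma_j$ to write, for each fixed $j$,
\[
P\bigl(\Sigma_j>k\alpha_k,\,S_1^n>na\bigr)=\int_{k\alpha_k}^{\infty}P\bigl(S^{(j)}>na-s\bigr)\,dP_{\Sigma_1}(s)\geq P(\Sigma_1>k\alpha_k)\,P\bigl(S^{(j)}>(n-k)\bar a_n\bigr),
\]
where $\bar a_n:=(na-k\alpha_k)/(n-k)\to\infty$ by assumption. Both factors are extended-LDP probabilities to which Theorem~\ref{THM2} applies, producing
\[
P(\Sigma_1>k\alpha_k)=e^{-kI(\alpha_k)(1+o(1))}\quad\text{and}\quad P\bigl(S^{(j)}>(n-k)\bar a_n\bigr)=e^{-(n-k)I(\bar a_n)(1+o(1))}.
\]
Summing over the $m$ disjoint blocks via the first Bonferroni inequality then gives a lower bound on the numerator of order $m\,\exp\bigl(-kI(\alpha_k)-(n-k)I(\bar a_n)\bigr)(1+o(1))$.

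Finally I would combine the two estimates. Because $k\alpha_k+(n-k)\bar a_n=na$, convexity of $I$ gives $kI(\alpha_k)+(n-k)I(\bar a_n)\geq nI(a)$; under the growth conditions on $(k,\alpha_k,a)$ the excess is of lower order than the entropy factor $\log m=\log(n/k)$ coming from the choice of block position, so the numerator-to-denominator ratio is $\geq 1+o(1)$. The trivial matching upper bound $P(A_k,S_1^n>na)\leq P(S_1^n>na)$, together with a second-moment control on the number of blocks carrying a spike to rule out cancellation in Bonferroni, then promotes this to the claimed conditional convergence to $1$.

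The main obstacle is the last step, namely checking that the Jensen-type excess $kI(\alpha_k)+(n-k)I(\bar a_n)-nI(a)$ is dominated by $\log(n/k)$. The inequality above is only asymptotically tight, and verifying the bound requires a careful Taylor expansion of $I$ around the fixed level $a$ (where $I$ is finite and smooth) fused with the extended-LDP asymptotics of $I$ at infinity supplied by Theorem~\ref{THM2}; this is precisely where the joint hypotheses on $k$, $\alpha_k$ and the conditioning level $a$ enter.
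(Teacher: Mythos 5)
Your overall strategy (disjoint length-$k$ blocks, extended LDP on each block and on the complementary sum, compare to the LDP of $P(S_1^n>na)$) is in the right spirit, but the route through $A_k$ itself is genuinely different from the paper's. The paper works with the complement: it writes $A_k^c\subseteq\bigcap_j\{\Sigma_j<k\alpha_k\}$ over the disjoint blocks, bounds $P(\bigcap_j\cdot\,|\,C)$ by a product of conditional block probabilities, uses Bayes together with the inclusion $\{\Sigma_1<k\alpha_k\}\cap C\subseteq\{S_{k+1}^n>na-k\alpha_k\}$ to control a single factor, and lets the power $[n/k]$ do the rest. Your version instead tries to push the union event up to probability $P(C)$ via Bonferroni and a factorized lower bound, which demands a much sharper balance than the paper's one-sided argument ever needs.

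That balance is precisely where your argument breaks, and I do not think the gap you flag at the end can be closed. Your numerator estimate is
\[
\sum_{j=1}^{m}P\bigl(\Sigma_j>k\alpha_k\bigr)\,P\bigl(S^{(j)}>(n-k)\bar a_n\bigr)\;\approx\;m\,\exp\bigl(-kI(\alpha_k)-(n-k)I(\bar a_n)\bigr),
\]
and for this to be comparable with $P(C)\approx\exp(-nI(a))$ you need the Jensen excess $kI(\alpha_k)+(n-k)I(\bar a_n)-nI(a)$ to be $O(\log m)$ with $m=\lfloor n/k\rfloor$. But under the hypotheses you inherit, $\alpha_k\to\infty$ and $I$ grows superlinearly, so the single term $kI(\alpha_k)$ already diverges like $k\,I(\alpha_k)$ while $\log m=\log(n/k)$ grows at most logarithmically; the excess overwhelms the combinatorial factor by an exponentially large margin, and the Bonferroni lower bound lands at $o(1)\cdot P(C)$, which is useless. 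This is not a Taylor-expansion technicality: the factorized lower bound on each $P(\{\Sigma_j>k\alpha_k\}\cap C)$ is already sharp at the exponential scale, so the loss is intrinsic to the first-Bonferroni/lower-bound route. Separately, you accepted the displayed requirement $\bar a_n=(na-k\alpha_k)/(n-k)\to\infty$ at face value, but $\bar a_n<na/(n-k)\to a$ whenever $\alpha_k>0$ and $k/n\to0$, so as written that hypothesis cannot hold and must be read differently (e.g.\ $\bar a_n$ remaining above $EX$); in any case it does not rescue the Jensen-excess estimate. The paper's complement/product argument sidesteps the need for this balance entirely, which is why it is the workable route here.
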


\begin{proof}
The proof is simple and we briefly sketch the argument. Clearly
\begin{align*}
P\left(  \left.  A_{k}\right\vert S_{1}^{n}>na\right)   &  =1-P\left(  \left.
\cap_{j=0}^{\left[  n/k\right]  }\left(  S_{kj+1}^{kj}<k\alpha_{k}\right)
\right\vert S_{1}^{n}>na\right) \\
&  \leq1-\left(  P\left(  \left.  S_{1}^{k}<k\alpha_{k}\right\vert S_{1}%
^{n}>na\right)  \right)  ^{\left[  n/k\right]  }=:1-P.
\end{align*}
Now applying Bayes Theorem and the independence of the r.v's $X_{i}$'s, it
holds%
\[
P\leq\frac{P\left(  S_{k+1}^{n}>\frac{na-k\alpha_{k}}{n-k}\right)  }{P\left(
S_{1}^{n}>na\right)  }.
\]
Under the present hypotheses (\ref{condBrMa}) holds.Using (\ref{BrMa}) in the
numerator and the classical first order LDP result
\[
\log P\left(  S_{1}^{n}>na\right)  =-nI(a)\left(  1+o(1)\right)
\]
in the denominator, it follows that $P\rightarrow0$ as $n\rightarrow\infty$,
which concludes the proof.
\end{proof}

The consequence of Theorem \ref{THM2} is that on this segment of length $k$
where the slope exceeds $\alpha_{k}$ all the summands are of order $\alpha
_{k}$ so that the behaviour of the trajectory is nearly linear. Numerical
evidence confirm the theoretical ones; for very large $a$ and fixed (large)
$n$ , not surprisingly, the \textit{democratic localisation} holds on the
entire trajectory , in accordance with the results in this paper;
therefore\textit{ }$T_{n}$ is nearly a straight line from the origin up to the
point $(n,na_{n}).$ For smaller values of $a$ (typically for $a$ defined
through $P\left(  S_{1}^{n}>na\right)  $ of order $10^{-3}$ the phenomenon
quoted in the above proposition holds: $T_{n}$ consists in a number of oblic
segments. When $n$ is allowed to increase, the segments are longer and longer,
with increasing slope.

\section{Appendix}

\subsection{Proof of Lemma \ref{Lemma01}}

Write $\mathbf{x}=\left(  x_{1},...,x_{n}\right)  \in\mathbb{R}_{+}^{n}$, we
firstly define the following sets. Let for all $k$ between $O$ and $n$
\[
A_{k}:=\left\{  \text{there exist }i_{1},..,i_{k}\text{ such that }x_{i_{j}%
}\geq a_{n}+\epsilon_{n}\text{ for all }j\text{ with }1\leq j\leq k\right\}
\]

and
\[
B_{k}:=\left\{  \text{there exist }i_{1},..,i_{k}\text{ such that }x_{i_{j}%
}\leq a_{n}-\epsilon_{n}\text{ for all }j\text{ with }1\leq j\leq k\right\}
.
\]
Define%
\[
A=\bigcup\lim_{k=1}^{n}A_{k}%
\]
and
\[
B=\bigcup\lim_{k=1}^{n}B_{k}.
\]
It then holds
\[
I^{c}=A\cup B.
\]
It follows that
\begin{align*}
I_{g}(I^{c}\cap C)  &  =I_{g}\left(  (A\cup B)\cap C\right)  =\inf
_{\mathbf{x}\in(A\cap C)\cup(B\cap C)}I_{g}(\mathbf{x})\\
&  =\min\left(  I_{g}(A\cap C),I_{g}(B\cap C)\right)  .
\end{align*}
Thus we may calculate the minimum values of $\ $both $I_{g}(A\cap C)$ and
$I_{g}(B\cap C)$ respectively, and finally $I_{g}(I^{c}\cap C)$.

\textbf{Step 1:} In this step we prove that
\begin{equation}
I_{g}(A\cap C)=F_{g_{1}}(a_{n},\epsilon_{n}). \label{I(AetC)}%
\end{equation}
Let $\mathbf{x}:=\left(  x_{1},...,x_{n}\right)  $ belong to $A\cap C$ and
assume that $I_{g}(A\cap C)=I_{g}(\mathbf{x})$. Without loss of generality,
assume that the $x_{i}$'s are ordered ascendently, $x_{1}\leq...\leq x_{i}\leq
x_{i+1\leq},...\leq x_{n}$ and let $i$ and $k:=n-i$ with $1\leq i\leq n$ such
that
\[
\overbrace{x_{1}\leq...\leq x_{i}}^{n-k}<a_{n}+\epsilon_{n}\leq\overbrace
{x_{i+1}\leq...\leq x_{n}}^{k}.
\]
We first claim that $k<n$. Indeed let $\mathbf{y:=}\left(  y_{1}%
=a_{n}-\epsilon_{n},y_{2}=...=y_{n-1}=a_{n}+\epsilon_{n}\right)  $ which
clearly belongs to $A\cap C.$ For this \textbf{\ }$\mathbf{y}$ it holds
$I_{g}(\mathbf{y})=(n-1)g(a_{n}+\epsilon_{n})+g(a_{n}-\epsilon_{n})$ which is
strictly smaller than $ng(a_{n}+\epsilon_{n})=I_{g}(A_{n}\cap C)$ for large
$n.$ We have proved that $\mathbf{x}$ does not belong to $A_{n}\cap C.$

Let $\alpha_{i+1},...,\alpha_{n}$ be nonnegative, and write $x_{i+1}%
,...,x_{n}$ as
\[
x_{i+1}=a_{n}+\epsilon_{n}+\alpha_{i+1},...,x_{n}=a_{n}+\epsilon_{n}%
+\alpha_{n}.
\]
Under condition $(C)$, it holds
\begin{align*}
x_{1}+...+x_{i}  &  \geq na_{n}-\left(  x_{i+1}+...+x_{n}\right) \\
&  =na_{n}-k(a_{n}+\epsilon_{n})-\left(  \alpha_{i+1}+...+\alpha_{n}\right)  .
\end{align*}
Applying Jensen's inequality to the convex function $g$, we have
\begin{align*}
\sum_{i=1}^{n}g(x_{i})  &  =\left(  g(x_{i+1})+...+g(x_{n})\right)  +\left(
g(x_{1})+...+g(x_{i})\right) \\
&  \geq\left(  g(x_{i+1})+...+g(x_{n})\right)  +(n-k)g(x^{\ast}),
\end{align*}
where equality holds when $x_{1}=...=x_{i}=x^{\ast}$, with
\[
x^{\ast}=\frac{na_{n}-k(a_{n}+\epsilon_{n})-\left(  \alpha_{i+1}%
+...+\alpha_{n}\right)  }{n-k}.
\]
Define now the function function $(\alpha_{i+1},...,\alpha_{n},k)\rightarrow$
$f(\alpha_{i+1},...,\alpha_{n},k)$ through
\begin{align*}
f(\alpha_{i+1},...,\alpha_{n},k)  &  =g(x_{i+1})+...+g(x_{n})+(n-k)g(x^{\ast
})\\
&  =g(a_{n}+\epsilon_{n}+\alpha_{i+1})+...+g(a_{n}+\epsilon_{n}+\alpha
_{n})+(n-k)g(x^{\ast}).
\end{align*}
Then $I_{g}(A\cap C)$ is given by
\[
I_{g}(A\cap C)=\inf_{\alpha_{i+1},...,\alpha_{n}\geq0,1\leq k\leq n}%
f(\alpha_{i+1},...,\alpha_{n},k).
\]
We now obtain (\ref{I(AetC)}) through the properties of the function $f.$
Using $(\ref{lemma1formula21})$, the first order partial derivative of
$f(\alpha_{i+1},...,\alpha_{n},k)$ with respect to $\alpha_{i+1}$ is
\[
\frac{\partial f(\alpha_{i+1},...,\alpha_{n},k)}{\partial\alpha_{i+1}%
}=g^{\prime}(a_{n}+\epsilon_{n}+\alpha_{i+1})-g^{\prime}(x^{\ast})>0,
\]
where the inequality holds since $g(x)$ is strictly convex and $a_{n}%
+\epsilon_{n}+\alpha_{i+1}>x^{\ast}$. Hence $f(\alpha_{i+1},...,\alpha_{n},k)$
is an increasing function with respect to $\alpha_{i+1}$. This implies that
the minimum value of $f$ is attained when $\alpha_{i+1}=0$. In the same way,
we have $\alpha_{i+1}=...=\alpha_{n}=0.$ Therefore it holds
\[
I_{g}(A\cap C)=\inf_{1\leq k\leq n}f(\mathbf{0},k),
\]
with
\[
f(\mathbf{0},k)=kg(a_{n}+\epsilon_{n})+(n-k)g(x_{0}^{\ast}),
\]
where
\[
x_{0}^{\ast}=a_{n}-\frac{k}{n-k}\epsilon_{n}.
\]
The function $y\rightarrow f(\mathbf{0},y)$ with $0<y<n$ is increasing with
respect to $y$, since
\begin{align*}
\frac{\partial f(\mathbf{0},y)}{\partial y}  &  =g(a_{n}+\epsilon_{n}%
)-g(x_{0}^{\ast})-\frac{n\epsilon_{n}}{n-y}g^{\prime}(x_{0}^{\ast})\\
&  =\frac{n\epsilon_{n}}{n-y}\left(  \frac{g(a_{n}+\epsilon_{n})-g(x_{0}%
^{\ast})}{a_{n}+\epsilon_{n}-x_{0}^{\ast}}-g^{\prime}(x_{0}^{\ast})\right)
>0,
\end{align*}
due to the convexity of $g(x)$ and $a_{n}+\epsilon_{n}>x_{0}^{\ast}$. Hence
$f(\mathbf{0},k)$ is increasing with respect to $k;$ the minimal value of
$f(\mathbf{0},k)$attains with $k=1$. Thus we have
\[
I_{g}(A\cap C)=f(\mathbf{0},1)=F_{g_{1}}(a_{n},\epsilon_{n})
\]
which proves $(\ref{I(AetC)})$.

\bigskip

\textbf{Step 2:} In this step, we follow the same proof as above and prove
that
\[
I_{g}(B\cap C)=F_{g_{2}}(a_{n},\epsilon_{n}).
\]
With $\mathbf{x}$ defined through $I_{g}(\mathbf{x}):=I_{g}(B\cap C)$ with the
coordinates of $\mathbf{x}$ ranked in ascending order, with $j$ such that
$1\leq j\leq n$ and
\[
\overbrace{x_{1}\leq...\leq x_{j}}^{j}<a_{n}+\epsilon_{n}\leq\overbrace
{x_{j+1}\leq...\leq x_{n}}^{n-j}%
\]
we obtain $j<n$ through the same argument as above. Denote $x_{1},...,x_{j}$
by
\[
x_{1}=a_{n}-\epsilon_{n}-\alpha_{1},...,x_{n}=a_{n}-\epsilon_{n}-\alpha_{j},
\]
where $\alpha_{1},...,\alpha_{j}$ are nonnegative. Under condition $(C)$, it
holds
\begin{align*}
x_{j+1}+...+x_{n}  &  \geq na_{n}-\left(  x_{1}+...+x_{j}\right) \\
&  =na_{n}-j(a_{n}-\epsilon_{n})+\left(  \alpha_{1}+...+\alpha_{j}\right)  .
\end{align*}
Using Jensen's inequality to\ the convex function $g(x)$, we have
\begin{align*}
\sum_{i=1}^{n}g(x_{i})  &  =\left(  g(x_{1})+...+g(x_{j})\right)  +\left(
g(x_{j+1})+...+g(x_{n})\right) \\
&  \geq\left(  g(x_{1})+...+g(x_{j})\right)  +(n-j)g(x^{\sharp}),
\end{align*}
where the equality holds when $x_{j+1}=...=x_{n}=x^{\sharp}$, with
\[
x^{\sharp}=\frac{na_{n}-j(a_{n}-\epsilon_{n})+\left(  \alpha_{1}%
+...+\alpha_{j}\right)  }{n-j}.
\]
Define the function $(\alpha_{i+1},...,\alpha_{n},k)\rightarrow f(\alpha
_{i+1},...,\alpha_{n},k)$ through
\begin{align*}
f(\alpha_{1},...,\alpha_{j},j)  &  =g(x_{1})+...+g(x_{j})+(n-j)g(x^{\sharp})\\
&  =g(a_{n}-\epsilon_{n}-\alpha_{1})+...+g(a_{n}-\epsilon_{n}-\alpha
_{j})+(n-j)g(x^{\sharp}),
\end{align*}
then $I_{g}(A\cap C)$ is given by
\[
I_{g}(A\cap C)=\inf_{\alpha_{1},...,\alpha_{j}\geq0,1\leq j\leq n}f(\alpha
_{1},...,\alpha_{j},j).
\]

Using $(\ref{lemma1formula22})$, the first order partial derivative of
$f(\alpha_{1},...,\alpha_{j},j)$ with respect to $\alpha_{1}$ is
\[
\frac{\partial f(\alpha_{1},...,\alpha_{j},j)}{\partial\alpha_{1}}=-g^{\prime
}(a_{n}-\epsilon_{n}-\alpha_{1})+g^{\prime}(x^{\sharp})>0,
\]
where the inequality holds since $g(x)$ is convex and $a_{n}-\epsilon
_{n}-\alpha_{1}<x^{\sharp}$. Hence $f(\alpha_{1},...,\alpha_{j},j)$ is
increasing with respect to $\alpha_{1}$. This yields
\[
\alpha_{1}=...=\alpha_{j}=0.
\]
Therefore it holds
\[
I_{g}(B\cap C)=\inf_{1\leq k\leq n}f(\mathbf{0},j),
\]
with
\[
f(\mathbf{0},j)=jg(a_{n}-\epsilon_{n})+(n-j)g(x_{0}^{\sharp}),
\]
where
\[
x_{0}^{\sharp}=a_{n}+\frac{j}{n-j}\epsilon_{n}.
\]
The function $y\rightarrow f(\mathbf{0},y)$ with $0<y<n$ is increasing with
respect to $y$, since
\begin{align*}
\frac{\partial f(\mathbf{0},y)}{\partial y}  &  =g(a_{n}-\epsilon_{n}%
)-g(x_{0}^{\sharp})+\frac{n\epsilon_{n}}{n-j}g^{\prime}(x_{0}^{\sharp})\\
&  =\frac{n\epsilon_{n}}{n-y}\left(  g^{\prime}(x_{0}^{\sharp})-\frac
{g(x_{0}^{\sharp})-g(a_{n}-\epsilon_{n})}{x_{0}^{\sharp}-(a_{n}-\epsilon_{n}%
)}\right)  >0,
\end{align*}
by is convexity of $g$ ; in the above display $x_{0}^{\sharp}>a_{n}%
-\epsilon_{n}$. Hence $f(\mathbf{0},k)$ is increasing with respect to $k.$
Thus we have
\[
I_{g}(B\cap C)=f(\mathbf{0},1)=F_{g_{{}}}(a_{n},\epsilon_{n})
\]
which proves the claim.

Thus the proof is completed using $(\ref{I(AetC)})$ and $(\ref{I(BetC})$.

\subsection{Proof of Theorem \ref{theoremConvex}}

For $x=(x_{1},...,x_{n})\in\mathbb{R}_{+}^{n}$, define
\[
S_{g}(r)=\left\{  x:\sum_{1\leq i\leq n}g(x_{i})\leq r\right\}  .
\]
Then for any Borel set $A$ in $\mathbb{R}^{n}$ it holds
\begin{align*}
P(A)=  &  \int_{A}\exp\left(  -\sum_{1\leq i\leq n}p(x_{i})\right)
dx_{1},...,dx_{n}\\
&  =\exp(-I_{g}(A))\int_{A}dx_{1},...,dx_{n}\int{\Large 1}_{\left[
\sum_{1\leq i\leq n}g(x_{i})-I_{g}(A),\infty\right)  }(s)e^{-s}ds\\
&  =\exp(-I_{g}(A))\int_{0}^{\infty}Volume(A\cap S_{g}(I_{g}(A)+s))e^{-s}ds.
\end{align*}
The proof is divided in three steps.

\textbf{Step 1:} We prove that
\begin{align}
\label{convexlowerbound}P(C)\geq c^{n}\exp\left(  -I_{g}(C)-\tau_{n}-n\log
g(a_{n})\right)  .
\end{align}
where
\begin{align}
\label{theConv10}\tau_{n}=ng\left(  a_{n}+\frac{1}{g(a_{n})}\right)
-ng(a_{n}).
\end{align}

By convexity of $\ $\ the function $g$, and using \ condition $(C)$, applying
Jensen's inequality, with $x_{1}=...=x_{n}=a_{n}$ it holds
\[
I_{g}(C)=ng(a_{n}).
\]
We now consider the largest lower bound for
\[
\log Volume\left(  C\cap S_{g}(I_{g}(C)+\tau_{n})\right)  .
\]
\newline Denote $B=\left\{  \mathbf{x}:x_{i}\in\lbrack a_{n},a_{n}+\frac
{1}{g(a_{n})}]\right\}  $, $S_{g}(I_{g}(C)+\tau_{n})=\{\mathbf{x}:\sum
_{i=1}^{n}g(x_{i})\leq ng(a_{n})+\tau_{n}\}$.

For large $n$ and any $\mathbf{x:=}\left(  x_{1},..,x_{n}\right)  $ in $B$, it
holds
\[
\sum_{i=1}^{n}g(x_{i})\leq\sum_{i=1}^{n}g\Big(a_{n}+\frac{1}{g(a_{n}%
)}\Big)=ng\Big(a_{n}+\frac{1}{g(a_{n})}\Big)=ng(a_{n})+\tau_{n},
\]
where we used the fact that $g$ is an increasing function for large argument.
Hence
\[
B\subset S_{g}(I_{g}(C)+\tau_{n}).
\]
It follows that
\begin{align}
\label{lower bound p}\log Volume\left(  C\cap S_{g}(I_{g}(C)+\tau_{n})\right)
\geq\log Volume(B)=\log\left(  \frac{1}{g(a_{n})}\right)  ^{n}=-n\log g(a_{n})
\end{align}
which in turn using $(\ref{Integration p})$ and $(\ref{lower bound p}%
)$,implies
\begin{align*}
\log P(C):=  &  \log\int_{C}\exp\left(  -\sum_{1\leq i\leq n}g(x_{i})\right)
dx_{1},...,dx_{n}\\
&  \geq\log\left(  \exp(-I_{g}(C))\int_{\tau_{n}}^{\infty}Volume(C\cap
S_{g}(I_{g}(C)+s))e^{-s}ds\right) \\
&  \geq-I_{g}(C)-\tau_{n}+\log Volume(C\cap S_{g}(I_{g}(C)+\tau_{n}))\\
&  \geq-I_{g}(C)-\tau_{n}-n\log g(a_{n}),
\end{align*}
This proves the claim.

\textbf{Step 2:} In this step, we prove that%
\begin{align}
\label{convexupperbound}P(I^{c}\cap C)\leq c^{n}\exp\left(  -I_{g}(I^{c}\cap
C)+n\log I_{g}(I^{c}\cap C)+\log(n+1)\right)  .
\end{align}

For any Borel set $A$ in $\mathbb{R}^{n}$ it holds , for positive $s$,
$\ $\ let
\[
S_{g}(I_{g}(A)+s)=\left\{  \mathbf{x}:\sum_{1\leq i\leq n}g(x_{i})\leq
I_{g}(A)+s\right\}
\]
and
\[
F=\left\{  \mathbf{x}:g(x_{i})\leq I_{g}(A)+s,i=1,...,n\right\}  .
\]
It holds.%
\[
S_{g}(I_{g}(A)+s)\subset F.
\]

Since $\lim_{x\rightarrow\infty}g(x)/x=+\infty$%

\[
F\subset\{\mathbf{x}:x_{i}\leq(I_{g}(A)+s),i=1,...,n\},
\]
which yields
\[
S_{g}(I_{g}(A)+s)\subset\{\mathbf{x}:x_{i}\leq(I_{g}(A)+s),i=1,...,n\},
\]
from which we obtain
\[
Volume(A\cap S_{g}(I_{g}(A)+s))\leq Volume(S_{g}(I_{g}(A)+s))\leq
(I_{g}(A)+s)^{n}.
\]
With this inequality, the upper bound of integration $(\ref{Integration p})$
can be given when $a_{n}\rightarrow\infty.$%
\begin{align*}
\log P(A)=  &  \log\int_{A}\exp\left(  -\sum_{1\leq i\leq n}g(x_{i})\right)
dx_{1},...,dx_{n}\\
&  =-I_{g}(A)+\log\int_{0}^{\infty}Volume(A\cap S_{g}(I_{g}(A)+s))e^{-s}ds\\
&  \leq-I_{g}(A)+\log\int_{0}^{\infty}\left(  I_{g}(A)+s\right)  ^{n}e^{-s}ds,
\end{align*}
with integrating repeatedly by parts it holds
\begin{align}
&  \int_{0}^{\infty}\left(  I_{g}(A)+s\right)  ^{n}e^{-s}ds\label{th191}\\
&  =I_{g}(A)^{n}+n\int_{0}^{\infty}\left(  I_{g}(A)+s\right)  ^{n-1}%
e^{-s}ds\nonumber\\
&  =I_{g}(A)^{n}+nI_{g}(A)^{n-1}+n(n-1)\int_{0}^{\infty}\left(  I_{g}%
(A)+s\right)  ^{n-2}e^{-s}ds\nonumber\\
&  \leq(n+1)I_{g}(A)^{n},\nonumber
\end{align}
hence we have
\begin{align*}
&  \log\int_{A}\exp\left(  -\sum_{1\leq i\leq n}g(x_{i})\right)
dx_{1},...,dx_{n}\\
&  \leq-I_{g}(A)+\log\left(  (n+1)I_{g}(A)^{n}\right) \\
&  =-I_{g}(A)+n\log I_{g}(A)+\log(n+1).
\end{align*}
Replace $A$ by $I^{c}\cap C$. We then obtain%
\[
P(I^{c}\cap C)\leq c^{n}\exp\left(  -I_{g}(I^{c}\cap C)+n\log I_{g}(I^{c}\cap
C)+\log(n+1)\right)
\]
as sought.

\textbf{Step 3:} In this step, we will complete the proof , showing that
\[
\lim_{a_{n}\rightarrow\infty}\frac{P(I^{c}\cap C)}{P(C)}=0.
\]
By Lemma $\ref{Lemma01}$,
\[
I_{g}(I^{c}\cap C)=\min\left(  F_{g_{1}}(a_{n},\epsilon_{n}),F_{g_{2}}%
(a_{n},\epsilon_{n})\right)  .
\]
Using $(\ref{convexlowerbound})$ and $(\ref{convexupperbound})$ it holds
\[
\frac{P(I^{c}\cap C)}{P(C)}\leq\exp\left(  -H(a_{n},\epsilon_{n})+n\log
I_{g}(I^{c}\cap C)+\tau_{n}+n\log g(a_{n})+\log(n+1)\right)  .
\]
Under conditions $(\ref{condition33})$, by $(\ref{theConv10})$ when
$a_{n}\rightarrow\infty$, we have
\[
\frac{\tau_{n}}{H(a_{n},\epsilon_{n})}=\frac{nG(a_{n})}{H(a_{n},\epsilon_{n}%
)}\longrightarrow0,
\]
Using conditions $(\ref{condition31})$ and $(\ref{condition32})$, when
$a_{n}\rightarrow\infty$,
\[
\frac{n\log g(a_{n})}{H(a_{n},\epsilon_{n})}\longrightarrow0,\qquad
\emph{and}\qquad\frac{\log(n+1)}{H(a_{n},\epsilon_{n})}\longrightarrow0.
\]
As to the term $n\log I_{g}(I^{c}\cap C)$, we have
\begin{align*}
n\log I_{g}(I^{c}\cap C)  &  =n\min\left(  F_{g_{1}}(a_{n},\epsilon
_{n}),F_{g_{2}}(a_{n},\epsilon_{n})\right) \\
&  \leq n\log\left(  ng(a_{n}+\epsilon_{n})\right) \\
&  =n\log n+n\log g\left(  a_{n}+\epsilon_{n}\right)  .
\end{align*}
Under condition $(\ref{condition32})$, when $a_{n}\rightarrow\infty$, $n\log
g\left(  a_{n}+\epsilon_{n}\right)  $ is of small order with respect to
$H(a_{n},\epsilon_{n})$ as $n$ tends to infinity. Under condition
$(\ref{condition31})$, for $a_{n}$ large enough, there exists some positive
constant $Q$ such that $\log n\leq Q\log g(a_{n}).$ Hence we have%
\[
n\log n\leq Qn\log g(a_{n})
\]
which under condition $(\ref{condition32})$, yields that $n\log n$ is
negligible with respect to $H(a_{n},\epsilon_{n})$. Hence when $a_{n}%
\rightarrow\infty$, it holds
\[
\frac{n\log\left(  I_{g}(I^{c}\cap C)\right)  }{H(a_{n},\epsilon_{n}%
)}\longrightarrow0.
\]
Further, $(\ref{theconv11})$, $(\ref{theconv12})$ and $(\ref{theconv13})$ make
$(\ref{mainresult})$ hold. This completes the proof.

\subsection{Proof of Theorem \ref{THM2}}

The proof is is the same vein as that of Theorem \ref{theoremConvex}; some
care has to be taken in order to get similar bounds as developped in the
convex case.

Denote $\mathbf{x}=(x_{1},...,x_{n})$ in $\mathbb{R}^{n}$ and, for a Borel set
$A\in\mathbb{R_{+}}^{n}$ define
\[
I_{g,q}(A)=\inf_{\mathbf{x}\in A}I_{g,q}(\mathbf{x}),
\]
where
\[
I_{g,q}(\mathbf{x}):=\sum_{1\leq i\leq n}\left(  g(x_{i})+q(x_{i})\right)  .
\]
Also for any positive $r$ define
\[
S_{g,q}(r)=\left\{  \mathbf{x}:\sum_{1\leq i\leq n}\left(  g(x_{i}%
)+q(x_{i})\right)  \leq r\right\}  .
\]
Then it holds
\begin{align}
\label{th11} &  P(A)=\int_{A}\exp\left(  -\sum_{1\leq i\leq n}\left(
g(x_{i})+q(x_{i})\right)  \right)  dx_{1},...,dx_{n}\nonumber\\
&  =\exp(-I_{{g,q}}(A))\int_{A}dx_{1},...,dx_{n}\int{\Large 1}_{\left[
{\sum_{1\leq i\leq n}(g(x_{i})+q(x_{i}))}-I_{g,q}(A),\infty\right)  }%
(s)e^{-s}ds\nonumber\\
&  =\exp(-I_{g,q}(A))\int_{0}^{\infty}Volume(A\cap S_{g,q}(I_{g,q}%
(A)+s))e^{-s}ds.
\end{align}
\textbf{Step 1:} In this step we prove that
\[
I_{g,q}(C)\geq I_{g_{1}}(C)\geq nh(a_{n})=ng(a_{n})-nN\log g(a_{n}).
\]

For large $x$ it holds
\begin{equation}
g(x)-M(x)\leq g(x)+q(x)\leq g(x)+M(x). \label{th10}%
\end{equation}
Set $g_{1}(x)=g(x)-M(x)$ and $g_{2}(x)=g(x)+M(x)$, then it follows
\begin{equation}
I_{g_{1}}(C)\leq I_{g,q}(C)\leq I_{g_{2}}(C). \label{th100}%
\end{equation}
In the same way, it holds
\begin{equation}
I_{g_{1}}(I^{c}\cap C)\leq I_{g,q}(I^{c}\cap C)\leq I_{g_{2}}(I^{c}\cap C).
\label{th101}%
\end{equation}
By condition $(\ref{thcond011})$, there exists some sufficiently large
positive $y_{0}$ and some positive constant $N$ such that for $x\in\lbrack
y_{0},\infty)$
\begin{equation}
M(x)\leq N\log g(x). \label{th1011}%
\end{equation}
Set $r(x)=g(x)-N\log g(x)$, the second order derivative of $r(x)$ is
\[
r^{\prime\prime}(x)=g^{\prime\prime}(x)\left(  1-\frac{N}{g(x)}\right)
+\frac{N\left(  g^{\prime}(x)\right)  ^{2}}{g^{2}(x)},
\]
where the second term is positive. The function $g$ is increasing on some
interval $[X,\infty)$ where we also have $g(x)>x.$ Hence there exists some
$y_{1}\in\lbrack X,\infty)$ such that s $g(x)>N$ when $x\in\lbrack
y_{1},\infty)$. This implies that $r^{\prime\prime}(x)>0$ and $r^{\prime
}(x)>0$ and therefore $r(x)$ is convex and increasing on $[y_{1},\infty)$.

In addition, $M(x)$ is bounded on any finite interval; there exists some
$y_{2}\in\lbrack y_{1},\infty)$ such that for all $x\in(0,y_{2})$
\begin{equation}
M(x)\leq N\log g(y_{2}). \label{th430}%
\end{equation}
The function $g$ is convex and increasing on $[y_{2},\infty)$. Thus there
exists $y_{3}$ such that
\begin{equation}
g^{\prime}(y_{3})>2g^{\prime}(y_{2})\quad and\quad g(y_{3})>2N. \label{th4001}%
\end{equation}
We now construct a function $h$ as follows. Let
\begin{equation}
h(x)=r(x)\mathbf{1}_{[y_{3},\infty)}(x)+s(x)\mathbf{1}_{(0,y_{3})}(x),
\label{th4100}%
\end{equation}
where $s(x)$ is defined by
\begin{equation}
s(x)=r(y_{3})+r^{\prime}(y_{3})(x-y_{3}). \label{th41}%
\end{equation}

We will show that
\begin{equation}
g_{1}(x)\geq h(x) \label{th42}%
\end{equation}
for $x\in(0,\infty)$ .

If $x\in\lbrack y_{3},\infty)$, then by $(\ref{th1011})$, it holds
\begin{equation}
h(x)=r(x)=g(x)-N\log g(x)\leq g(x)-M(x)=g_{1}(x). \label{th410}%
\end{equation}
If $x\in(y_{2},y_{3})$, using $(\ref{th41})$, we have
\begin{equation}
s(x)\leq r(x)=g(x)-N\log g(x)\leq g(x)-M(x)=g_{1}(x), \label{th411}%
\end{equation}
where the first inequality comes from the convexity of $r(x)$. We now show
that $(\ref{th42})$ holds when $x\in(0,y_{2}]$ if $y_{3}$ is large enough. For
this purpose, set
\[
t(x)=g(x)-s(x)-N\log g(y_{2}).
\]
Take the first order derivative of $t$ and use the convexity of $g$ on
$(0,y_{2}].$ We have
\begin{align*}
t^{\prime}(x)  &  =g^{\prime}(x)-s^{\prime}(x)=g^{\prime}(x)-r^{\prime}%
(y_{3})=g^{\prime}(x)-\left(  g^{\prime}(y_{3})-\frac{Ng^{\prime}(y_{3}%
)}{g(y_{3})}\right) \\
&  =g^{\prime}(x)-\left(  1-\frac{N}{g(y_{3})}\right)  g^{\prime}(y_{3})\leq
g^{\prime}(y_{2})-\left(  1-\frac{N}{g(y_{3})}\right)  g^{\prime}(y_{3})\\
&  <\frac{1}{2}g^{\prime}(y_{3})-\left(  1-\frac{N}{g(y_{3})}\right)
g^{\prime}(y_{3})<0,
\end{align*}
where the inequalities in the last line hold from $(\ref{th4001})$. Therefore
$t$ is decreasing on $(0,y_{2}].$ It follows that
\[
t(x)\geq t(y_{2})=g(y_{2})-N\log g(y_{2})-s(y_{2})\geq g(y_{2})-N\log
g(y_{2})-r(y_{2})=0,
\]
which, together with $(\ref{th430})$, yields, when $x\in(0,y_{2}]$
\[
g_{1}(x)=g(x)-M(x)\geq g(x)-N\log g(y_{2})\geq s(x).
\]
Together with $(\ref{th410}),(\ref{th411})$, this last display means that
$(\ref{th42})$ holds.

We now prove that $h$ is a convex function on on $(0,\infty)$.; indeed for $x
$ such that $0<x\leq y_{3}$, $h^{\prime\prime}(x)=0$, and if $x>y_{3}$,
$h^{\prime\prime}(x)=r^{\prime\prime}(x)>0$. The left derivative of $h(x)$ at
$y_{3}$ is $h^{\prime}(y_{3}^{-})=r^{\prime}(y_{3})$, and it is obvious that
the right derivative of $h(x)$ at $y_{3}$ is also $h^{\prime}(y_{3}%
^{+})=r^{\prime}(y_{3});$ hence $h$ is derivable at $y_{3}$ and $h^{\prime
}(y_{3})=r^{\prime}(y_{3})$, hence $h^{\prime\prime}(y_{3})=r^{\prime\prime
}(y_{3})>0$. This shows that $h$ is convex on $(0,\infty)$.

Now under condition $(C)$, using the convexity of $h$ and $(\ref{th42})$, it
holds
\[
I_{g_{1}}(\mathbf{x})=\sum_{i=1}^{n}\left(  g(x_{i})-M(x_{i})\right)  \geq
\sum_{i=1}^{n}h(x_{i})\geq nh\left(  \frac{\sum_{i=1}^{n}x_{i}}{n}\right)
=nh(a_{n}).
\]
Using $(\ref{th100})$, we obtain the lower bound of $I_{g,q}(C)$ under
condition $(C)$ for $a_{n}$ large enough (say, $a_{n}>y_{3}$)
\begin{equation}
I_{g,q}(C)\geq I_{g_{1}}(C)\geq nh(a_{n})=nr(a_{n})=ng(a_{n})-nN\log g(a_{n}).
\label{the21}%
\end{equation}

\textbf{Step 2:} In this step, we will show that the following lower bound of
$P(C)$ holds
\begin{equation}
P(C)\geq c^{n}\exp\left(  -I_{g,q}(C)-\tau_{n}-n\log g(a_{n})\right)  ,
\label{th12}%
\end{equation}
where $\tau_{n}$ is defined by
\begin{align}
\tau_{n}  &  =ng\left(  a_{n}+\frac{1}{g(a_{n})}\right)  -ng(a_{n})+nN\log
g\left(  a_{n}+\frac{1}{g(a_{n})}\right)  +nN\log g(a_{n})\label{th51}\\
&  =nG(a_{n})+nN\log g(a_{n})+nN\log g\left(  a_{n}+\frac{1}{g(a_{n})}\right)
.\nonumber
\end{align}

Denote $B=\left\{  \mathbf{x}:x_{i}\in\lbrack a_{n},a_{n}+\frac{1}{g(a_{n}%
)}].\right\}  .$ If $\mathbf{x}\in B$, by $(\ref{th1011})$, which holds for
large $n$ (say, $a_{n}>y_{3}$ and assuming that $g$ is an increasing function
on $\left(  y_{3},\infty\right)  $), we have
\begin{align*}
I_{g,q}(\mathbf{x})  &  \leq\sum_{i=1}^{n}\left(  g(x_{i})+M(x_{i})\right)
\leq\sum_{i=1}^{n}\left(  g(x_{i})+N\log g(x_{i})\right) \\
&  \leq\sum_{i=1}^{n}\left(  g\left(  a_{n}+\frac{1}{g(a_{n})}\right)  +N\log
g\left(  a_{n}+\frac{1}{g(a_{n})}\right)  \right) \\
&  =ng\left(  a_{n}+\frac{1}{g(a_{n})}\right)  +nN\log g\left(  a_{n}+\frac
{1}{g(a_{n})}\right) \\
&  =\tau_{n}+ng(a_{n})-nN\log g(a_{n})\leq\tau_{n}+I_{g,q}(C),
\end{align*}
where the last inequality holds from $(\ref{the21})$. Since $B\subset C$, we
have
\[
B\subset C\cap S_{g,q}(I_{g,q}(C)+\tau_{n}).
\]
Now we may obtain the lower bound
\begin{equation}
\log Volume\left(  C\cap S_{g,q}(I_{g,q}(C)+\tau_{n})\right)  \geq\log
Volume(B)=-n\log g(a_{n}). \label{theoremnonconvex11}%
\end{equation}
Using $(\ref{th11})$ and $(\ref{theoremnonconvex11})$, it holds
\begin{align*}
&  \log\int_{C}\exp\left(  -\sum_{1\leq i\leq n}\left(  g(x_{i})+q(x_{i}%
)\right)  \right)  dx_{1},...,dx_{n}\\
&  =-I_{g,q}(C)+\log\int_{0}^{\infty}Volume(C\cap S_{g,q}(I_{g,q}%
(C)+s))e^{-s}ds\\
&  \geq-I_{g,q}(C)+\log\int_{\tau_{n}}^{\infty}Volume(C\cap S_{g,q}%
(I_{g,q}(C)+\tau_{n}))e^{-s}ds\\
&  \geq-I_{g,q}(C)-\tau_{n}-n\log g(a_{n}),
\end{align*}
so $(\ref{th12})$ holds.

\bigskip

\textbf{Step 3:} We prove that
\begin{align}
\label{th13}P(I^{c}\cap C)\leq c^{n}\exp\left(  -I_{g,q}(I^{c}\cap C)+n\log
I_{g}(I^{c}\cap C)+\log(n+1)+n\log2\right)  .
\end{align}

For any Borel set $A$ in $\mathbb{R}^{n}$ and any positive $s$,%
\[
S_{g,q}(I_{g,q}(A)+s)=\left\{  \mathbf{x}:\sum_{1\leq i\leq n}\left(
g(x_{i})+q(x_{i})\right)  \leq I_{g,q}(A)+s\right\}
\]
is included in $\left\{  \mathbf{x}:g(x_{i})+q(x_{i})\leq I_{g,q}%
(A)+s,i=1,...,n\right\}  $ which\ in turn is included in $F=\{\mathbf{x}%
:g(x_{i})-M(x_{i})\leq(I_{g,q}(A)+s),i=1,...,n\}$ by $(\ref{th10})$.

Set $H=\{\mathbf{x:=}\left(  x_{1},..,x_{n}\right)  :x_{i}\leq2(I_{g,q}%
(A)+s),i=1,...,n\}$, we will show it holds for $a_{n}$ large enough
\[
F\subset H.
\]
Suppose that for some $\mathbf{x:=}\left(  x_{1},..,x_{n}\right)  $ in $F$
,some $x_{i}$ is larger than $2(I_{g,q}(A)+s)$. For $a_{n}$ large enough, by
$(\ref{the21})$, it holds
\begin{align*}
x_{i}  &  \geq2(I_{g,q}(A)+s)\geq2\left(  ng(a_{n})-nN\log g(a_{n})\right) \\
&  >2\left(  ng(a_{n})-\frac{1}{4}ng(a_{n})\right)  =\frac{3}{2}ng(a_{n}).
\end{align*}
Since $\frac{3}{2}ng(a_{n})\geq\frac{3}{2}na_{n}$ for large $n$, by
$(\ref{th1011})$ and since $x\rightarrow g(x)-N\log g(x)$ is increasing, we
have
\begin{align*}
g(x_{i})-M(x_{i})  &  \geq g(x_{i})-N\log g(x_{i})\geq g\left(  2(I_{g,q}%
(A)+s)\right)  -N\log g\left(  2(I_{g,q}(A)+s)\right) \\
&  >g\left(  2(I_{g,q}(C)+s)\right)  -\frac{1}{2}g\left(  2(I_{g,q}%
(C)+s)\right) \\
&  \geq\frac{1}{2}\left(  2(I_{g,q}(C)+s)\right)  =I_{g,q}(C)+s.
\end{align*}
Therefore since $\mathbf{x}\in F$, $x_{i}\leq2(I_{g,q}(A)+s)$ for every $i$,
which implicates that $(\ref{the22})$ holds. Thus we have
\[
S_{g,q}(I_{g,q}(A)+s)\subset H,
\]
from which we deduce that
\begin{align*}
Volume\left(  A\cap S_{g,q}(I_{g,q}(A)+s)\right)   &  \leq Volume\left(
S_{g,q}(I_{g,q}(A)+s)\right) \\
&  \leq Volume(H)=2^{n}(I_{g,q}(A)+s)^{n}.
\end{align*}
With this inequality, the upper bound of integration $(\ref{th11})$ can be
given when $a_{n}\rightarrow\infty$ through
\begin{align*}
&  \log\int_{C}\exp\left(  -\sum_{1\leq i\leq n}\left(  g(x_{i})+q(x_{i}%
)\right)  \right)  dx_{1},...,dx_{n}\\
&  =-I_{g,q}(A)+\log\int_{0}^{\infty}Volume(A\cap S_{g,q}(I_{g,q}%
(A)+s))e^{-s}ds\\
&  \leq-I_{g,q}(A)+\log\int_{0}^{\infty}\left(  I_{g,q}(A)+s\right)
^{n}e^{-s}ds+n\log2.
\end{align*}
According to $(\ref{th191})$, it holds
\[
\int_{0}^{\infty}\left(  I_{g,q}(A)+s\right)  ^{n}e^{-s}ds\leq(n+1)I_{g,q}%
(A)^{n},
\]
Hence we have
\begin{align*}
&  \log\int_{A}\exp\left(  -\sum_{1\leq i\leq n}\left(  g(x_{i})+q(x_{i}%
)\right)  \right)  dx_{1},...,dx_{n}\\
&  \leq-I_{g,q}(A)+\log\left(  (n+1)I_{g,q}(A)^{n}\right)  +n\log2\\
&  =-I_{g,q}(A)+n\log I_{g,q}(A)+\log(n+1)+n\log2.
\end{align*}
Replacing $A$ by $I^{c}\cap C$ yields $(\ref{th13})$.

\bigskip

\textbf{Step 4}: In this step, we derive crude bounds for $I_{g_{2}%
}(C),I_{g_{1}}(I^{c}\cap C)$ and $I_{g_{2}}(I^{c}\cap C)$.

From $(\ref{th1011})$ and $(\ref{th430})$, there exists some $a_{n}\in\lbrack
X,\infty)$ (say, $a_{n}>y_{2}$) such that
\begin{equation}
M(x)\leq\max(N\log g(a_{n}),N\log g(x)) \label{the20}%
\end{equation}
holds on $(0,\infty).$ Hence for $a_{n}$ large enough
\[
g_{2}(x)=g(x)+M(x)\leq g(x)+\max(N\log g(a_{n}),N\log g(x)),
\]
which in turn yields
\begin{equation}
I_{g_{2}}(C)\leq\inf_{\mathbf{x}\in C}\left(  \sum_{i=1}^{n}g(x_{i}%
)+\sum_{i=1}^{n}\max(N\log g(a_{n}),N\log g(x_{i}))\right)  . \label{the230}%
\end{equation}
It holds
\begin{equation}
\inf_{\mathbf{x}\in C}\left(  \sum_{i=1}^{n}\max(N\log g(a_{n}),N\log
g(x_{i}))\right)  =nN\log g(a_{n}) \label{the231}%
\end{equation}
which implies that
\begin{align*}
&  \inf_{\mathbf{x}\in C}\left(  \sum_{i=1}^{n}g(x_{i})+\sum_{i=1}^{n}%
\max(N\log g(a_{n}),N\log g(x_{i}))\right) \\
&  =\inf_{\mathbf{x}\in C}\left(  \sum_{i=1}^{n}g(x_{i})\right)
+\inf_{\mathbf{x}\in C}\left(  \sum_{i=1}^{n}\max(N\log g(a_{n}),N\log
g(x_{i}))\right) \\
&  =\inf_{\mathbf{x}\in C}\left(  \sum_{i=1}^{n}g(x_{i})\right)  +nN\log
g(a_{n})\\
&  =I_{g}(C)+nN\log g(a_{n})=ng(a_{n})+nN\log g(a_{n}).
\end{align*}
Thus we obtain the inequality
\begin{equation}
I_{g_{2}}(C)\leq ng(a_{n})+nN\log g(a_{n}). \label{the31}%
\end{equation}

We now provide a lower bound of $I_{g_{1}}(I^{c}\cap C)$. Consider the
inequality of $(\ref{th42})$ in \textbf{Step 1}, where we have showed that $h
$ is convex for $x$ large enough; hence, using $(\ref{th42})$ when $a_{n}$ is
sufficiently large, it holds%

\[
I_{g_{1}}(I^{c}\cap C)\geq I_{h}(I^{c}\cap C)=\min\left(  F_{h_{1}}%
(a_{n},\epsilon_{n}),F_{h_{2}}(a_{n},\epsilon_{n})\right)  ,
\]
where the second inequality holds from Lemma $\ref{Lemma01}$. By the
definition of the function $h$ in $(\ref{th4100})$, for large $x$ it holds
$h(x)=r(x)$ which yields the following lower bound of $I_{g_{1}}(I^{c}\cap C)$%

\[
I_{g_{1}}(I^{c}\cap C)\geq I_{h}(I^{c}\cap C)=I_{r}(I^{c}\cap C)=\min\left(
F_{r_{1}}(a_{n},\epsilon_{n}),F_{r_{2}}(a_{n},\epsilon_{n})\right)  .
\]
By Lemma $\ref{Lemma01}$, it holds
\begin{align*}
F_{r_{1}}(a_{n},\epsilon_{n})  &  =g(a_{n}+\epsilon_{n})+(n-1)g\left(
a_{n}-\frac{1}{n-1}\epsilon_{n}\right) \\
&  \qquad\qquad-N\log g(a_{n}+\epsilon_{n})-(n-1)N\log g\left(  a_{n}-\frac
{1}{n-1}\epsilon_{n}\right) \\
&  \geq g(a_{n}+\epsilon_{n})+(n-1)g\left(  a_{n}-\frac{1}{n-1}\epsilon
_{n}\right)  -nN\log g\left(  a_{n}+\epsilon_{n}\right)  ,
\end{align*}
by the same way, we have also
\[
F_{r_{2}}(a_{n},\epsilon_{n})\geq g(a_{n}-\epsilon_{n})+(n-1)g\left(
a_{n}+\frac{1}{n-1}\epsilon_{n}\right)  -nN\log g\left(  a_{n}+\epsilon
_{n}\right)  ,
\]
hence
\[
I_{g_{1}}(I^{c}\cap C)\geq\min\left(  F_{g_{1}}(a_{n},\epsilon_{n}),F_{g_{2}%
}(a_{n},\epsilon_{n})\right)  -nN\log g\left(  a_{n}+\epsilon_{n}\right)
\]
holds.

The method of the estimation of the upper bound of $I_{g_{1}}(I^{c}\cap C)$ is
similar to that used for $I_{g_{1}}(C)$ above. In $(\ref{the230})$, replace
$C$ by $I^{c}\cap C;$ we obtain%

\begin{align*}
I_{g_{2}}(I^{c}\cap C)  &  \leq\inf_{\mathbf{x}\in I^{c}\cap C}\left(
\sum_{i=1}^{n}g(x_{i})+\sum_{i=1}^{n}\max(N\log g(a_{n}),N\log g(x_{i}%
))\right) \\
&  \leq\inf_{\mathbf{x}\in I^{c}\cap C}\left(  \sum_{i=1}^{n}g(x_{i}%
)+\sum_{i=1}^{n}\max(N\log g\left(  a_{n}+\frac{\epsilon_{n}}{n-1}\right)
,N\log g(x_{i}))\right)  .
\end{align*}
Similarly to $(\ref{the231}),$ \ it holds
\[
\inf_{\mathbf{x}\in I^{c}\cap C}\left(  \sum_{i=1}^{n}\max(N\log g\left(
a_{n}+\frac{\epsilon_{n}}{n-1}\right)  ,N\log g(x_{i}))\right)  =nN\log
g\left(  a_{n}+\frac{\epsilon_{n}}{n-1}\right)  ,
\]
where equality is attained setting $x_{1}=...=x_{n-1}=a_{n}+\epsilon
_{n}/(n-1),x_{n}=a_{n}-\epsilon_{n}$. Hence we have, when $n\rightarrow
\infty$
\begin{align*}
I_{g_{2}}(I^{c}\cap C)  &  \leq\inf_{\mathbf{x}\in I^{c}\cap C}\left(
\sum_{i=1}^{n}g(x_{i})+\sum_{i=1}^{n}\max(N\log g\left(  a_{n}+\frac
{\epsilon_{n}}{n-1}\right)  ,N\log g(x_{i}))\right) \\
&  =\inf_{\mathbf{x}\in I^{c}\cap C}\sum_{i=1}^{n}g(x_{i})+nN\log g\left(
a_{n}+\frac{\epsilon_{n}}{n-1}\right) \\
&  =I_{g}(I^{c}\cap C)+nN\log g\left(  a_{n}+\frac{\epsilon_{n}}{n-1}\right)
\\
&  \leq g(a_{n}-\epsilon_{n})+(n-1)g\left(  a_{n}+\frac{1}{n-1}\epsilon
_{n}\right)  +nN\log g\left(  a_{n}+\frac{\epsilon_{n}}{n-1}\right) \\
&  \leq ng\left(  a_{n}+\frac{\epsilon_{n}}{n-1}\right)  +nN\log g\left(
a_{n}+\frac{\epsilon_{n}}{n-1}\right) \\
&  \leq n(N+1)g\left(  a_{n}+\frac{\epsilon_{n}}{n-1}\right)  .
\end{align*}
Therefore we obtain
\begin{equation}
\log I_{g_{2}}(I^{c}\cap C)\leq\log n+\log(N+1)+\log g\left(  a_{n}%
+\frac{\epsilon_{n}}{n-1}\right)  . \label{the41}%
\end{equation}

\textbf{Step 5:} In this step, we complete the proof by showing that
\[
\lim_{a_{n}\rightarrow\infty}\frac{P(I^{c}\cap C)}{P(C)}=0.
\]

Using the upper bound of $P(I^{c}\cap C)$, together with the lower bound of
$P(C)$ above, we have under condition $(\ref{thcond12})$ when $a_{n}$ is large
enough
\begin{align*}
\frac{P(I^{c}\cap C)}{P(C)}  &  \leq\exp\left(
\begin{array}
[c]{c}%
-\left(  I_{g,q}(I^{c}\cap C)-I_{g,q}(C)\right)  +n\log I_{g,q}(I^{c}\cap C)\\
+\tau_{n}+n\log g(a_{n})+\log(n+1)+n\log2
\end{array}
\right) \\
&  \leq\exp\left(  -\left(  I_{g,q}(I^{c}\cap C)-I_{g,q}(C)\right)  +n\log
I_{g,q}(I^{c}\cap C)+\tau_{n}+2n\log g(a_{n})\right) \\
&  \leq\exp\left(  -\left(  I_{g_{1}}(I^{c}\cap C)-I_{g_{2}}(C)\right)  +n\log
I_{g_{2}}(I^{c}\cap C)+\tau_{n}+2n\log g(a_{n})\right)  .
\end{align*}
The last inequality holds from $(\ref{th100})$ and $(\ref{th101})$. Replace
$I_{g_{1}}(I^{c}\cap C),I_{g_{2}}(C)$ by the upper bound of $(\ref{the31})$
and the lower bound of $(\ref{theoremnonconvex12})$, respectively, we obtain
\begin{align}
I_{g_{1}}(I^{c}\cap C)-I_{g_{2}}(C)  &  \geq\min\left(  F_{g_{1}}%
(a_{n},\epsilon_{n}),F_{g_{2}}(a_{n},\epsilon_{n})\right)  -nN\log g\left(
a_{n}+\epsilon_{n}\right) \nonumber\label{2theo31}\\
&  \qquad\qquad-\left(  ng(a_{n})+nN\log g(a_{n})\right) \nonumber\\
&  =H(a_{n},\epsilon_{n})-nN\log g\left(  a_{n}+\epsilon_{n}\right)  -nN\log
g(a_{n})\nonumber\\
&  \geq H(a_{n},\epsilon_{n})-2nN\log\left(  a_{n}+\epsilon_{n}\right)  .
\end{align}

Under condition $(\ref{thcond12})$, there exists some $Q$ such that $n\log
n\leq Qn\log g(a_{n})$, which, together with $(\ref{the41})$ and
$(\ref{2theo31})$, gives%

\begin{align}
\label{theoremnonconvex14}\frac{P(I^{c}\cap C)}{P(C)}  &  \leq\exp\left(
\begin{array}
[c]{c}%
-\left(  H(a_{n},\epsilon_{n})-2nN\log\left(  a_{n}+\epsilon_{n}\right)
\right)  +n\log n+n\log(N+1)\nonumber\\
+n\log g\left(  a_{n}+\frac{\epsilon_{n}}{n-1}\right)  +\tau_{n}+2n\log
g(a_{n})
\end{array}
\right) \nonumber\\
&  =\exp\left(
\begin{array}
[c]{c}%
-H(a_{n},\epsilon_{n})+n(2N+1)\log g\left(  a_{n}+\epsilon_{n}\right) \\
+\tau_{n}+2n\log g(a_{n})+n\log n+n\log(N+1)
\end{array}
\right) \nonumber\\
&  \leq\exp\left(  -H(a_{n},\epsilon_{n})+n(2N+1)\log g\left(  a_{n}%
+\epsilon_{n}\right)  +\tau_{n}+2n\log g(a_{n})+2n\log n\right) \nonumber\\
&  \leq\exp\left(  -H(a_{n},\epsilon_{n})+n(2N+1)\log g\left(  a_{n}%
+\epsilon_{n}\right)  +\tau_{n}+(2Q+2)n\log g(a_{n})\right) \nonumber\\
&  \leq\exp\left(  -H(a_{n},\epsilon_{n})+n(2N+2Q+3)\log g\left(
a_{n}+\epsilon_{n}\right)  +\tau_{n}\right)  .
\end{align}

The second term in the bracket \ in the last line above and $\tau_{n}$ are
both of small order with respect to $H(a_{n},\epsilon_{n})$. Indeed under
condition $(\ref{thcond13})$, when $a_{n}\rightarrow\infty$, it holds
\begin{align}
\label{theoremnonconvex15}\lim_{n\rightarrow\infty}\frac{n(2N+2Q+3)\log
g\left(  a_{n}+\frac{\epsilon_{n}}{n-1}\right)  }{H(a_{n},\epsilon_{n})}=0.
\end{align}

For $\tau_{n}$ which is defined in $(\ref{th51})$under conditions
$(\ref{thcond13}),(\ref{thcond14})$, $nN\log g(a_{n})$ and $nG(a_{n})$ are
both of smaller order than $H(a_{n},\epsilon_{n})$. As regards to the third
term of $\tau_{n}$, it holds
\begin{align*}
nN\log g\left(  a_{n}+\frac{1}{g(a_{n})}\right)   &  =nN\log\left(  g\left(
a_{n}+\frac{1}{g(a_{n})}\right)  -g(a_{n})+g(a_{n})\right) \\
&  \leq nN\log\left(  2\max\left(  G(a_{n}),g(a_{n})\right)  \right) \\
&  =nN\log2+\max\left(  nN\log G(a_{n}),nN\log g(a_{n})\right)  .
\end{align*}
Under conditions $(\ref{thcond13})$ and $(\ref{thcond14})$, \ both $nN\log
G(a_{n})$ and $nN\log g(a_{n})$ are small with respect to $H(a_{n}%
,\epsilon_{n});$ therefore $nN\log g\left(  a_{n}+{1}/{g(a_{n})}\right)  $ is
small with respect to $H(a_{n},\epsilon_{n})$ when $a_{n}\rightarrow\infty$.
Hence it holds when $a_{n}\rightarrow\infty$
\[
\lim_{n\rightarrow\infty}\frac{\tau_{n}}{H(a_{n},\epsilon_{n})}=0.
\]
Finally, $(\ref{theoremnonconvex14})$, together with
$(\ref{theoremnonconvex15})$ and $(\ref{theoremnonconvex16})$, implies that
$(\ref{theoremnonconvex13})$ holds.

\bigskip

\end{document}